\DeclareMathOperator{\fn}{null}
\def\be{\begin{equation}}
\def\ee{\end{equation}}
\def\ben{\begin{equation*}}
\def\een{\end{equation*}}
\def\RR{{\mathbb R}}
\def\CC{{\mathbb C}}
\def\NN{{\mathbb N}}
\def\ZZ{{\mathbb Z}}
\def\TT{{\mathbb T}}
\def\isom{\cong}
\def\a{\alpha}
\def\g{\gamma}
\def\A{{\cal A}}
\def\H{{\cal H}}
\def\S{{\cal S}}
\def\emptyset{\varnothing}
\def\Diff{{\rm Diff}}
\def\S2{S^{1(2)}}
\def\fg{{\mathfrak g}}
\def\fh{{\mathfrak h}}
\def\fn{{\mathfrak n}}
\def\fgc{\mathfrak{g}_\CC}
\def\<{\langle}
\def\>{\rangle}
\def\sg{\mathscr{S}\fgc}
\def\sch{\mathscr{S}}
\def\dg{\mathscr{D}\fgc}
\def\supp{\mathrm{supp}}
\def\a{\alpha}
\def\g{\gamma}
\newtheorem{theorem}{Theorem}[section]
\newtheorem{lemma}[theorem]{Lemma}
\newtheorem{corollary}[theorem]{Corollary}
\theoremstyle{definition}
\newtheorem{definition}[theorem]{Definition}
\theoremstyle{remark}
\newtheorem{remark}[theorem]{Remark} %\renewcommand{\theremark}{}
\title{Ground state representations of loop algebras}
\author{
{\bf Yoh Tanimoto\footnote{Supported in part by the ERC Advanced Grant 227458
OACFT ``Operator Algebras and Conformal Field Theory''.}}\\
Dipartimento di Matematica, Universit\`a di Roma ``Tor
Vergata''\\ Via della Ricerca Scientifica, 1 - I--00133 Roma, Italy.\\
E-mail: {\tt tanimoto@mat.uniroma2.it}}
\begin{document}
\maketitle
\begin{abstract}
Let $\fg$ be a simple Lie algebra, $L\fg$ be the loop algebra of $\fg$.
Fixing a point in $S^1$ and identifying the real line with the punctured circle,
we consider the subalgebra $\sch \fg$ of $L\fg$ of rapidly decreasing elements on $\RR$.
We classify the translation-invariant 2-cocycles on $\sch\fg$.
We show that the ground state representation of $\sch\fg$ is unique for each
cocycle. These ground states correspond precisely to the vacuum representations
of $L\fg$.
\end{abstract}

\section{Introduction}
For a compact connected Lie group $G$, the group of smooth maps from the circle $S^1$
to $G$ is called the loop group $LG$ of $G$. Loop groups have been a subject
of extensive research both from purely mathematical and physical viewpoints
(\cite{PS}, \cite{Wassermann}, \cite{Xu}, \cite{GF}, \cite{Toledano}, \cite{DMS}).
On the one hand,
the representation theory of $LG$ has a particularly simple structure. If we consider
positive energy projective representations (defined below), and if $G$ is simply connected,
then such representations behave very much like ones of compact groups. They are
completely reducible, irreducible representations are classified by their ``lowest weights'',
and irreducible representations are realized as
the spaces of complex line bundles on the group by analogy with Borel-Weil theory
\cite{PS}. On the other hand, any such representation can be considered as a
charged sector of a conformal field theory.

It is a natural variant to think about the group of maps from the real line
$\RR$ into $G$. The natural group of covariance is now the translation group.
Since $S^1$ is a one-point compactification of $\RR$, we consider
this group as a subgroup of $LG$.
Then one would expect that there should arise several representations which
do not extend to $LG$. This problem has been open for a long time \cite{PS}.

The main objective of this paper is to show the contrary
at the level of Lie algebra with the assumption of existence of an invariant vector:
Namely, if a (projective) unitary representation of $\sch\fgc$ (the Lie subalgebra
of $L\fgc$ of Schwartz class elements, defined below) is covariant with respect to
translation and admits a cyclic vector invariant under translation, then
it extends to a representation of $L\fgc$.
Then even a complete classification of such representations with a
``ground state vector'' follows due to the classification for $L\fgc$ by
Garland \cite{Garland} or at group level by Pressley and Segal \cite{PS}.
This is in a clear contrast to the case of the diffeomorphism group on the circle, where
the subgroup of elements which fix the point of infinity has many representations
which do not extend to the whole group \cite{tanimoto1}. Moreover, several new representations
arise when we further restrict the consideration to the subgroup of diffeomorphisms of $\RR$
with compact support, which include representations with an invariant vector with respect to
translations \cite{tanimoto2}.

Besides the interest from a purely representation-theoretic context, the study of positive energy
representations with an invariant vector for translation is motivated by physics, in particular
by chiral conformal field theory. I will explain this aspect based on the operator-algebraic
approach to CFT.

In the setting of algebraic quantum field theory, a chiral component of a conformal field
theory is a net of von Neumann algebras on the circle satisfying natural requirements which come
from physics (isotony, locality, covariance, existence of vacuum, etc.)\cite{GF}. To construct
examples of such nets, we can utilize positive energy representations of loop groups, and
in fact these examples have played a key role in the classification of certain conformal
field theories \cite{KL}, \cite{Xu}.

For a certain class of representations of nets, a sophisticated theory has been established
by Doplicher-Haag-Roberts (for its adaptation to chiral CFT, see \cite{GF}).
The DHR theory is concerned with representations which are
localized in some interval, i.e., unitarily equivalent to the original (vacuum) representations
outside the interval of localization. These representations are considered to describe
the states with finite charge.

On the other hand, in a physical context we are sometimes interested in a larger class of
representations. A typical case occurs in the study of thermal equilibrium states.
A thermal equilibrium state is invariant with time, thus in the context of one-dimensional
chiral theory it is invariant under translation. By physical intuition, we would say that
a state with a finite amount
of charge cannot be invariant under translation. Then we should consider a more general class
of representations. As explained later, an invariant state for translation whose GNS representation
has positive-energy can be considered as an equilibrium state with temperature zero. Physicists
call it a ground state.

Nets of von Neumann algebras generated by representations of loop groups are known to have
a property called complete rationality \cite{KLM}, \cite{Xu}.
This complete rationality implies that
the net has only finitely many inequivalent irreducible DHR representations.
Physically it means only finite amount
of charge is possible in such a model. Then one would guess that any completely rational
net has only equilibrium states without charge. We will prove a result on representations
of the Lie algebras of loop groups which strongly supports this point of view, namely, we will show
that any ground state representation of the loop algebra (in a certain sense clarified below)
is the vacuum representation.

Similar lines of research can be conducted also for equilibrium states
with finite temperature \cite{CLTW}, in which the authors have shown
that if a conformal net is completely rational then it admits the
unique KMS state.

John E. Roberts has proved that for a general dilation-covariant net of observables
there is a unique dilation-invariant state, the vacuum \cite{Roberts}.
This in particular tells us that a ground state different from the vacuum
cannot be dilation-invariant (although this never excludes the existence of
other ground states). In fact, the composition of a ground state on the Virasoro nets with dilation
is used to produce different ground states \cite{tanimoto2}. A similar technique is used in \cite{CLTW}
to obtain continuously many different KMS states.

At the end of the introduction, I would like to note that
the above result on KMS states has been
proved with the techniques of operator algebras, in particular subfactors, and utilizes
relationships between several nets. On the other hand,
the present result on the uniqueness of ground states for loop algebras relies only on
elementary facts on Lie algebras and gives a direct proof.

Unfortunately, the present result does not imply directly the uniqueness of ground state
of nets of von Neumann algebras. There are still difficulties in the
differentiability of given representations and extension to ``Schwartz class'' algebra.
These problems will be discussed in the final section.

This paper is organized as follows.
In section 2 we recall standard facts on loop groups $LG$ and loop algebras $L\fg$,
their central extensions and representation theory. In section 3 we introduce the
main object of this paper, the algebra $\sch \fg$. In section 4 we prove that
translation-invariant 2-cocycle on $\sch \fg$ is essentially unique up to scalar.
In section 5 we prove that ground states on $\sch \fg$ can be classified only by
the cocycle. In section 6 we discuss the physical meaning of ground states and
possible implications to the representation theory of conformal nets of
von Neumann algebras.

\section{Preliminary}
In this section, we collect notations and basic results on loop groups and loop algebras.
Throughout this article, $G$ is a simple and simply connected Lie group.
Let $\fg$ be the Lie algebra of $G$.
\subsection{The loop group $LG$}
Although the main results of this paper regard representations of
infinite-dimensional Lie algebras, we would like to start with preliminary
on groups. The representation theory of infinite-dimensional Lie algebras
inevitably contains unbounded operators on infinite dimensional vector spaces,
and standard notions as irreducibility and decomposition of representations could be
difficult or practically not appropriate to define in total generality.
On the other hand, at the group level
the representation theory of compact group gives us a strong device for such
reduction.

The present paper is motivated by the representation theory of the loop group
$LG$ of $G$:
\begin{align*}
LG := \{g:S^1 \to G, \mbox{smooth}\}, \\
g_1\cdot g_2(z) := g_1(z)\cdot g_2(z), z \in S^1.
\end{align*}

This group is an infinite-dimensional Lie group with the Lie algebra $L\fg$ called
loop algebra:
\begin{align*}
L\fg := \{\xi:S^1 \to \fg, \mbox{smooth}\}, \\
[\xi_1,\xi_2](z) := [\xi_1(z),\xi_2(z)], z \in S^1.
\end{align*}
$L\fg$ has the natural topology by the uniform convergence of each derivative
and the differential structure.
It is also possible to define a natural topology on the group $LG$, and
there is a smooth map from the neighbourhood of the unit element of $LG$ to
the neighbourhood of $0$ in $L\fg$. The group operation corresponds to the
bracket \cite{Milnor} \cite{PS}. 

A 2-cocycle on a group $H$ with values in $\TT$ is a map $\g: H\times H \to \TT$ which
satisfies (see \cite[Chapter 3]{Schottenloher})
\[
\g(e,e) = 1, \phantom{...} \g(f,g)\g(fg,h) = \g(f,gh)\g(g,h),
\]
where $e$ is the unit element of $H$. If there is $\beta: H \to \TT$ such that
$\g(f,g) = \frac{\beta(f)\beta(g)}{\beta(fg)}$, then $\g$ is said to be coboundary.
The set of 2-cocycles forms a group by defining the product with pointwise
multiplication. If one cocycle is a multiple of a coboundary with another cocycle,
these two cocycles are said to be equivalent.

A group $\widetilde{LG}$ is called a central extension of $LG$ by $\TT$ if there is
an exact sequence
\[
0 \to \TT \to \widetilde{LG} \to LG \to 0.
\]
A central extension is said to be split if $\widetilde{LG} \isom \TT \times LG$.
There is a one-to-one correspondence between central extensions of $LG$ and
equivalence classes of 2-cocycles \cite{Schottenloher}.

The following is fundamental \cite[Chapter 4]{PS}.
\begin{theorem}[Pressley and Segal]
If $G$ is simple and simply connected, then there exists a family of central
extensions of $LG$ which are parametrized by positive integers, and all such
extensions come from the central extensions of $L\fg$ (see below).
\end{theorem}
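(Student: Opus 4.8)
The plan is to transfer the problem to the Lie algebra, classify the Lie algebra cocycles there, and then determine precisely which of them integrate to a central extension by $\TT$ rather than merely by $\RR$.

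First I would pass from groups to Lie algebras. Since $G$ is simple and simply connected, $LG$ is connected; moreover, using the splitting $LG \isom \Omega G \rtimes G$ one gets $\pi_1(LG) \isom \pi_2(G)\times\pi_1(G) = 0$, so $LG$ is in fact simply connected. Consequently a central extension of $LG$ by $\TT$ is completely determined by its differential, which is a central extension of $L\fg$ by $\RR$: two group extensions inducing the same Lie algebra extension are isomorphic, and conversely every Lie algebra extension integrates to a simply connected group extension of $LG$ by $\RR$. Thus the classification reduces to (i) classifying central extensions of $L\fg$ by $\RR$, and (ii) deciding, for each one, whether the corresponding simply connected extension of $LG$ by $\RR$ descends to an extension by $\TT = \RR/\ZZ$.

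For step (i) I would carry out the standard cohomological computation. Because $\fg$ is simple, the invariant symmetric bilinear forms on $\fg$ form a one-dimensional space (spanned by a suitable normalization of the Killing form, which I write $\langle\,,\,\rangle$); then
\be
\omega(\xi,\eta) \;=\; \frac{1}{2\pi i}\oint_{S^1}\langle \xi(z),\eta'(z)\rangle\,dz
\ee
is a 2-cocycle on $L\fg$, and an analysis of $H^2(L\fg;\RR)$ — decomposing into Fourier modes and reducing to the classical facts that simple $\fg$ has vanishing first cohomology and trivial second cohomology with trivial coefficients, together with the uniqueness of the invariant form — shows that every cocycle on $L\fg$ equals $c\,\omega$ modulo a coboundary for a unique $c\in\RR$. (This is the content referred to by ``see below'' in the statement.)

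The crux is step (ii). The extension $c\,\omega$ defines a left-invariant closed $2$-form on $LG$, and the simply connected $\RR$-extension descends to a $\TT$-extension exactly when the de Rham class of this form is integral, i.e.\ lies in the image of $H^2(LG;\ZZ)\to H^2(LG;\RR)$. Here one uses that $LG$ is simply connected with $\pi_2(LG)\isom\pi_3(G)\isom\ZZ$, so by Hurewicz and universal coefficients $H^2(LG;\ZZ)\isom\ZZ$ and $H^2(LG;\RR)\isom\RR$; integrality then amounts to the period of $c\,\omega$ over a generator of $\pi_2(LG)$ being an integer. The normalization of $\langle\,,\,\rangle$ is chosen precisely so that $\omega$ has period $1$ over this generator — this fixes the ``basic'' level and is where a genuine computation is needed: one represents the generator via an explicit map $S^2\to\Omega G$ dual to the generator of $\pi_3(G)$ and evaluates, equivalently using the integrality of the Cartan $3$-form / WZW term. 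Granting this, the admissible values are exactly $c\in\ZZ$; $c=0$ gives the split extension, $c$ and $-c$ are interchanged by an automorphism, and each positive integer yields a nontrivial central extension of $LG$, all of which by construction arise from the Lie algebra cocycle $c\,\omega$. I expect this period computation — the identification $\pi_2(LG)\isom\pi_3(G)\isom\ZZ$ and the pinning down of the normalization so that the integral classes are exactly the integer multiples of $\omega$ — to be the main obstacle.
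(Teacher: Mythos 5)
The paper does not prove this statement: it is quoted verbatim from Pressley--Segal \cite[Chapter 4]{PS} and used as background, so there is no internal proof to compare against. Your outline is, in substance, the argument given in that reference, and it is correct in its main lines: $LG$ is connected and simply connected (via $LG\isom\Omega G\rtimes G$ and $\pi_1(G)=\pi_2(G)=0$), the continuous Lie algebra cocycles form a one-dimensional space spanned by $\omega$, and the $\TT$-extension exists precisely when the left-invariant $2$-form has integral periods over $\pi_2(LG)\isom\pi_3(G)\isom\ZZ$, the basic inner product being normalized so that $\omega$ has period $1$ on the generator. Two points deserve more care than your sketch gives them. First, the step ``every Lie algebra extension integrates to a simply connected group extension of $LG$ by $\RR$'' invokes Lie's third theorem, which fails for general infinite-dimensional Lie groups; Pressley--Segal avoid this by constructing $\widetilde{LG}$ directly as a principal $\TT$-bundle with connection whose curvature is the given $2$-form, the integrality of the class being exactly the existence criterion for such a bundle, rather than by first producing an $\RR$-extension and then descending. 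Second, the classification $H^2(L\fg)\isom\RR$ in step (i) requires a continuity (or rotation-invariance) hypothesis on the cocycles --- as the paper itself is careful to impose in its analogous Section 4 for $\sch\fg$ --- since the purely algebraic second cohomology of the full smooth loop algebra is not obviously one-dimensional; the Whitehead-lemma/Fourier-mode argument you describe literally classifies cocycles on the polynomial subalgebra and must be supplemented by a density/continuity argument to reach all of $L\fg$.
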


The 2-cocycles of the group appear when we consider projective representations.
\begin{definition}
Let $\H$ be a Hilbert space. A map $\pi: LG \to U(\H)$ is a projective
unitary representation if there is a 2-cocycle $\g$ of $G$ such that
\[
\pi(g_1)\pi(g_2) = \g(g_1,g_2)\pi(g_1\cdot g_2).
\]
\end{definition}

If we have a projective representation of $LG$, by definition it also
specifies a 2-cocycle of $LG$, and this cocycle determines the class
of central extensions. We call this class the {\bf level} of the representation.
We can naturally think that the given projective
representation of $LG$ as a ``true'' representation, not projective,
of the central extension.

Note that the circle $S^1$ acts on $LG$ by rotation:
\[
g_\theta(z) := g(e^{-i\theta}z).
\]
\begin{definition}
A projective unitary representation $\pi$ of $LG$ is said to have positive
energy if there is a unitary representation $U$ of $S^1$ on the same
Hilbert space with positive spectrum such that
\[
U(\theta)\pi(g)U(\theta)^* = \pi(g_\theta).
\]
\end{definition}
\begin{remark}\label{invariancerot}
Let us define the action of rotation on the space of 2-cocycles
by $\g_\theta(g_1,g_2) = \g((g_1)_{-\theta},(g_2)_{-\theta})$. Then
any positive energy representation has a 2-cocycle which is
invariant under this action of translation.
\end{remark}

The loop group $LG$ contains constant loops. The set of constant loops forms
a subgroup isomorphic to $G$. A constant loop is of course
invariant under rotation, hence it commutes with the action of rotation.
As seen below, the restriction of the central extension to this subgroup of
constant loops splits, hence we may assume that any projective representation
of $LG$ is associated with a true representation of $G$. By the positivity of energy,
$U$ has a lowest eigenvalue. Since the subgroup of constant loops commutes
with $U$, $G$ acts on this eigenspace. When the full representation is
irreducible, this restriction should be irreducible.

Now we can state the classification result \cite[Chapter 9]{PS}
\begin{theorem}[Pressley and Segal]
Any smooth positive-energy projective unitary representation of $LG$ is
completely reducible.
Smooth positive-energy projective unitary irreducible representations
of $LG$ can be classified by the level $h$ and the lowest weight $\lambda$
of $G$ on the lowest eigenspace of $U$. Such a representation is possible
if and only if
\begin{equation}\label{dominantgroup}
-\frac{1}{2}h\|h_\a\|^2 \le \lambda(h_\a) \le 0
\end{equation}
for each positive root $\a$ and $h_\a$ is the coroot.

All such representations are diffeomorphism covariant: Namely,
there is a projective unitary representation $U$ of $\Diff(S^1)$ such that
$U(\g)\pi(g)U(\g)^* = \pi(g\circ \g^{-1})$, where the composition
$g\circ \g^{-1}$ is again an element of $LG$.
\end{theorem}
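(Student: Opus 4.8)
The plan is to reduce the classification to the representation theory of the affine Kac--Moody algebra $\widehat{\fgc} = L\fgc \oplus \CC c \oplus \CC d$, where $c$ is the central element attached to the level-$h$ cocycle of the first theorem and $d$ generates rotations. First I would show that a smooth positive-energy projective unitary representation $\pi$ of $LG$ differentiates: on the dense space of smooth vectors it yields a representation of $L\fgc$ by operators that are skew-adjoint after complexification and carry the expected cocycle, and together with the self-adjoint energy operator $H$, where $U(\theta)=e^{i\theta H}$ and $\mathrm{spec}(H)\subset[0,\infty)$, and the central charge acting as the scalar $h$, this is a unitary positive-energy module for $\widehat{\fgc}$ in the Kac--Moody sense. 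The sign conventions give $[H,\xi_n]=-n\,\xi_n$ for $\xi_n=\xi\otimes z^n$, so the modes with $n>0$ strictly lower the energy; one also checks by a compactness/estimate argument that the energy eigenspaces are finite-dimensional.

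The structural heart is the following. Let $V_0$ be the lowest eigenspace of $H$. The positive modes lower the energy, hence annihilate $V_0$, so $V_0$ is a finite-dimensional module for $\fgc$ (the constant loops, which commute with $H$) and for the central element; decompose it into irreducible $\fgc$-submodules $V_\lambda$ with lowest weights $\lambda$. By the PBW theorem the cyclic $\widehat{\fgc}$-submodule generated by $V_\lambda$ is a quotient of the generalized Verma module $M(\lambda,h)$, and unitarity forces it to be the irreducible quotient $L(\lambda,h)$, since the radical of the invariant contravariant form is exactly the maximal proper submodule. Complete reducibility follows by the usual argument: the orthocomplement of the sum of all irreducible positive-energy $\widehat{\fgc}$-submodules of $\H$, if nonzero, is itself a positive-energy subrepresentation whose lowest eigenspace is nonzero and killed by the positive modes, hence contains an irreducible $\fgc$-submodule generating an irreducible positive-energy $\widehat{\fgc}$-submodule that lies both inside the orthocomplement and, by definition, inside the sum --- a contradiction. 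Thus $\H=\bigoplus L(\lambda,h)$ as $\widehat{\fgc}$-modules.

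Next I would derive the constraint \eqref{dominantgroup}. Each $L(\lambda,h)$ that occurs is a unitary positive-energy module, hence \emph{integrable}: for every affine simple root --- the finite ones $\alpha_1,\dots,\alpha_\ell$ of $\fgc$ together with $\alpha_0=\delta-\theta$, $\theta$ the highest root --- the attached $\mathfrak{sl}_2$-triple integrates to a representation of a compact $SU(2)$, which forces the relevant weight to be a non-positive (resp.\ non-negative) integer. For the finite simple roots this yields $\lambda(h_\a)\le 0$ for every positive root $\a$ of $\fgc$; for $\alpha_0$, after normalising the invariant form so that the coroots appear as in the statement, it yields $\lambda(h_\theta)\ge -\tfrac12 h\|h_\theta\|^2$, and since this $\alpha_0$-inequality is the binding one among all positive roots once $\lambda$ is anti-dominant, one recovers \eqref{dominantgroup}. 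Conversely, for every $(\lambda,h)$ satisfying \eqref{dominantgroup} one builds the integrable module $L(\lambda,h)$, equips it with the contravariant Hermitian form, which is positive definite precisely by integrability (cf.\ Garland \cite{Garland}), checks that its vectors are smooth, and exponentiates the $\widehat{\fgc}$-action to a projective unitary positive-energy representation of $LG$. This last, analytic, step --- the controlled exponentiation from the Lie algebra back up to the loop group --- is where I expect the main difficulty to lie, the algebraic input being the classical theory of integrable highest-weight modules over affine Kac--Moody algebras.

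Finally, diffeomorphism covariance comes from the Sugawara construction: setting $L_n=\frac{1}{2(h+\check h)}\sum_{m}\sum_a {:}\,\xi^a_m\xi^a_{n-m}{:}$ for an orthonormal basis $\{\xi^a\}$ of $\fgc$ and $\check h$ the dual Coxeter number, one checks $[L_n,\xi_m]=-m\,\xi_{n+m}$ and that the $L_n$ satisfy the Virasoro relations with central charge $c=h\dim\fgc/(h+\check h)$; this gives a unitary positive-energy representation of $\Vir$ on $\H$, which integrates to a projective unitary representation $U$ of $\Diff(S^1)$, and $U(\g)\pi(g)U(\g)^{*}=\pi(g\circ\g^{-1})$ is the integrated form of $[L_n,\xi_m]=-m\,\xi_{n+m}$. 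The two analytic passages --- from the group representation down to the Lie-algebra representation and, more seriously, back up again, for $LG$ and for $\Diff(S^1)$ --- are the points requiring care; everything in between is the Kac--Moody representation theory used as a black box.
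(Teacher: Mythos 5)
This theorem is stated in the paper purely as background and attributed to \cite[Chapter 9]{PS}; the paper contains no proof of it, so there is no internal argument to measure yours against. Your sketch follows the standard route --- differentiate the smooth positive-energy representation to a unitary positive-energy module over $\widehat{\fgc}$, show the lowest eigenspace of $H$ is annihilated by the energy-lowering modes and decompose it under the constant loops, identify each cyclic submodule with an irreducible integrable lowest-weight module $L(\lambda,h)$ via the contravariant form, extract the inequality (\ref{dominantgroup}) from integrability at the affine simple roots (the finite ones give $\lambda(h_\a)\le 0$, the affine root $\a_0=\delta-\theta$ gives the lower bound), and obtain $\Diff(S^1)$-covariance from the Sugawara construction --- and this is essentially how the result is established in \cite{PS} and the Kac--Moody literature, so as an outline it is correct. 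Two caveats. First, your claim that the energy eigenspaces are finite-dimensional is false for general reducible positive-energy representations (an infinite multiple of a single irreducible is a counterexample), but your orthocomplement argument for complete reducibility does not actually need it, only the existence of a nonzero lowest eigenspace, which follows from positivity and integrality of the spectrum of $H$. Second, the two analytic passages you flag are not peripheral: the differentiation step is exactly where the smoothness hypothesis enters, and the converse direction --- realizing every $L(\lambda,h)$ satisfying (\ref{dominantgroup}) as an actual projective unitary representation of $LG$ --- is carried out in \cite{PS} by explicit constructions (Borel--Weil, spin and fermionic realizations) rather than by an abstract exponentiation theorem. So the proposal is a faithful map of the proof, with the genuinely hard analytic content deferred to the cited sources rather than supplied.
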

In particular, for each level $h$ there are finitely many representations.
In terms of CFT, a representation with $\lambda = 0$ corresponds to the
vacuum representation. Each representation of $LG$ with a different level
corresponds to a different theory, and different weights corresponds to
different sectors. Finiteness of representations is the source of rationality
of these loop group models.

\subsection{The loop algebra $L\fg$}
There are similar notions of 2-cocycles, central extensions,
projective representations for Lie algebras. Since we need them also for our main
object $\sch\fg$, we explain these notions for general Lie algebras and then we state
corresponding standard results on irreducible representations of $L\fg$.

For $L\fg$, the complexification $(L\fg)_\CC$ can be
naturally defined and be identified with $L\fgc$.
It obtains a structure of $*$-Lie algebra by defining
$\xi^*(z) := (\xi(z))^*$, where in the right hand side $^*$ means the $*$-operation
with respect to the compact form.

Instead of analysing the loop algebras directly, it is customary to consider
the polynomial loops $\xi(z) = \sum_k \xi_k z^k$, where $\xi_k \in \fgc$ and
only finitely many terms appear in the sum.
Let us denote the polynomial subalgebra by $\widetilde{\fgc}$. It is easy to see that
$\widetilde{\fgc} \isom \fgc \otimes_\CC \CC[t,t^{-1}]$ with the bracket
$[x\otimes t^k,y\otimes t^l] = [x,y]\otimes t^{k+l}$.

A 2-cocycle on a complex Lie algebra $\fh$ is a bilinear form $\omega: \fh\times \fh \to \CC$
which satisfies
\begin{align}
\omega(\xi,\eta) = -\omega(\eta,\xi), \\
\omega([\xi,\eta],\zeta)+\omega([\eta,\zeta],\xi)+\omega([\zeta,\xi],\eta) = 0. \label{jacobi}
\end{align}
For a given cocycle $\omega$, we can define a new Lie algebra $\widetilde{\fh} := \fh\oplus \CC$
with the following operation,
\[
[(\xi,a_1),(\eta,a_2)] = ([\xi,\eta],\omega(\xi,\eta)),
\]
and we call it the central extension of $\fh$ by the cocycle $\omega$. It is customary to
express this algebra using a formal central element $C$ and to define the commutation relation
\[
[\xi + a_1C,\eta + a_2C] = [\xi,\eta] + \omega(\xi,\eta)C.
\]

The 2-cocycle on the algebra $L\fgc$ is unique up to a scalar \cite[Proposition 4.2.4]{PS}.
\begin{theorem}
Any $G$-invariant 2-cocycle on $L\fgc$ is proportional to the following one.
\[
\omega(\xi,\eta) = \frac{1}{2\pi i}\int_0^{2\pi} \<\xi(\theta),\eta^\prime(\theta)\> d\theta,
\]
where $\<\cdot,\cdot\>$ is the unique invariant symmetric form on $\fgc$.
\end{theorem}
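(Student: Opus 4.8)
The plan is to reduce everything to the dense subalgebra $\widetilde{\fgc}\isom\fgc\otimes_\CC\CC[t,t^{-1}]$ of Fourier polynomials, pin $\omega$ down completely there, and then match the integral formula and return to $L\fgc$ by continuity. The constant loops form a copy of $\fgc$ acting on $L\fgc$ by $\xi\mapsto[x,\xi]$, and $G$-invariance of $\omega$ gives, in infinitesimal form,
\[
\omega([x,\xi],\eta) + \omega(\xi,[x,\eta]) = 0 \qquad(x\in\fgc,\ \xi,\eta\in L\fgc).
\]
Hence, for each fixed pair $m,n\in\ZZ$, the bilinear form $(x,y)\mapsto\omega(x\otimes t^m,\,y\otimes t^n)$ on $\fgc$ is $\fgc$-invariant. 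Invariant bilinear forms on $\fgc$ correspond to $\fgc$-module maps $\fgc\to\fgc^*$; since $\fgc$ is simple the adjoint representation is irreducible and $\fgc^*\isom\fgc$, so by Schur's lemma the space of such forms is one-dimensional, spanned by the invariant symmetric form $\<\cdot,\cdot\>$. Therefore $\omega(x\otimes t^m,\,y\otimes t^n) = c_{mn}\<x,y\>$ for scalars $c_{mn}\in\CC$, and antisymmetry of $\omega$ together with symmetry of $\<\cdot,\cdot\>$ forces $c_{mn} = -c_{nm}$.

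Next I would feed triples $x\otimes t^k,\ y\otimes t^l,\ z\otimes t^m$ into the cocycle identity \eqref{jacobi}. Using $[x\otimes t^k, y\otimes t^l] = [x,y]\otimes t^{k+l}$ and the total antisymmetry of $(x,y,z)\mapsto\<[x,y],z\>$ (a consequence of invariance and symmetry of $\<\cdot,\cdot\>$), the three terms collapse to $(c_{k+l,\,m} + c_{l+m,\,k} + c_{m+k,\,l})\,\<[x,y],z\> = 0$. Since $\fgc$ is simple and $\<\cdot,\cdot\>$ nondegenerate, $\<[x,y],z\>\neq 0$ for some $x,y,z$, so
\[
c_{k+l,\,m} + c_{l+m,\,k} + c_{m+k,\,l} = 0 \qquad(k,l,m\in\ZZ).
\]
The crux is to solve this. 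Organizing the modes by total degree $N$ and setting $e_N(s):=c_{s,\,N-s}$, antisymmetry reads $e_N(s) = -e_N(N-s)$, and the relation above becomes $e_N(a)+e_N(b)+e_N(c) = 0$ whenever $a+b+c = 2N$; rewriting $e_N(2N-a-b) = -e_N(a+b-N)$ via antisymmetry, this says precisely that $p\mapsto e_N(p+N)$ is additive on $\ZZ$, hence linear. Antisymmetry then forces the slope to vanish unless $N=0$, so $e_N\equiv 0$ for $N\neq 0$ while $e_0(s) = \gamma s$ for a single constant $\gamma := c_{1,-1}$; equivalently
\[
\omega(x\otimes t^m,\,y\otimes t^n) = \gamma\, m\,\delta_{m+n,\,0}\,\<x,y\>.
\]

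It remains to compare with the candidate cocycle and extend. Writing $z = e^{i\theta}$, for $\xi(\theta) = x\,e^{im\theta}$ and $\eta(\theta) = y\,e^{in\theta}$ one computes $\frac{1}{2\pi i}\int_0^{2\pi}\<\xi(\theta),\eta'(\theta)\>\,d\theta = n\,\delta_{m+n,0}\<x,y\>$, which on its support $m+n = 0$ equals $-m\,\delta_{m+n,0}\<x,y\>$; hence $\omega$ coincides with $-\gamma$ times the integral cocycle throughout $\widetilde{\fgc}$. Since the trigonometric polynomials are dense in $L\fgc$ in the $C^\infty$-topology and both are continuous bilinear forms, proportionality persists on all of $L\fgc$. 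I expect the two main obstacles to be, first, carrying out the resolution of the functional equation cleanly — the reorganization by total degree is where the actual work lies — and second, the passage from $\widetilde{\fgc}$ to smooth loops, which needs an a priori continuity hypothesis on $\omega$, or else an argument that a $G$-invariant cocycle on $L\fgc$ is automatically determined by its restriction to Fourier modes.
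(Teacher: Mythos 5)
Your argument is correct, and it is worth noting that the paper does not actually prove this statement: it is quoted from Pressley--Segal \cite[Proposition 4.2.4]{PS}, so there is no internal proof to match against. Your route is the standard one for the circle, and each step checks out: the reduction $\omega(x\otimes t^m,y\otimes t^n)=c_{mn}\<x,y\>$ via Schur, the cyclic identity $c_{k+l,m}+c_{l+m,k}+c_{m+k,l}=0$, and the resolution by total degree (your additivity trick correctly yields $e_N(s)=\gamma_N(s-N)$, with antisymmetry killing $\gamma_N$ for $N\neq 0$) all work, and the sign bookkeeping in the final comparison with the integral formula is right. The closest internal comparison is the paper's own Theorem in Section 4, the analogue for $\sg$ on the line, which follows the same two-stage strategy (Killing form times a scalar bilinear form $\g(f,g)$ on functions, then the induced Jacobi identity $\g(fg,h)+\g(gh,f)+\g(hf,g)=0$); but since $\RR$ has no global Fourier basis, the paper must first establish locality of $\g$, then run your mode argument \emph{locally} on intervals $[-\frac{a}{2},\frac{a}{2}]$ with truncated exponentials $e_k$, and finally invoke translation invariance to identify $\g(\cdot,e_1)$ --- machinery your global $t^m$ decomposition lets you skip entirely on $S^1$. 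The one caveat you correctly flag is real: the passage from $\widetilde{\fgc}$ to $L\fgc$ requires continuity of $\omega$ in the $C^\infty$-topology, which the theorem as stated in the paper omits but which is implicit in the Pressley--Segal formulation (continuous cochains) and is assumed explicitly in the paper's Section 4 analogue; without it the purely algebraic second cohomology of $\fgc\otimes C^\infty(S^1)$ is much larger, so the hypothesis cannot be dropped.
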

At the Lie algebra level, central extensions by proportional cocycles are isomorphic, hence
we identify them and denote the equivalence class by $\widehat{\fgc}$.

As noted at the beginning of this section, it is not convenient to treat general
representations and decomposition into irreducible representations. Instead, in the following
we consider a special class of irreducible representations.

First of all, $\fgc$ has the triangular decomposition
\[
\fgc = \fn^+ \oplus \fh \oplus \fn^-,
\]
where $\fh$ is the Cartan subalgebra of $\fgc$. Let $\{H_i\}$ be the basis of
$\fh$ with respect to the root decomposition and $\omega_i \in \fh^*$ such that
$\omega_i(H_j) = \delta_{i,j}$, and $\widetilde{\a}$ be the highest root.

Following this decomposition of $\fgc$, we can decompose $\widehat{\fgc}$ as follows.
\[
\widehat{\fgc} = 
 \left(\fgc\otimes_\CC (\CC[t]\ominus \CC) \oplus \fn^+\right)
 \oplus \left(\fh \oplus \CC C\right)
 \oplus \left(\fgc\otimes_\CC (\CC[t^{-1}] \ominus \CC)\oplus \fn^-\right).
\]
This is the triangular decomposition of $\widehat{\fgc}$.
Put $\hat{\fh} := \left(\fh \oplus \CC C\right)$. We define weights on $\hat{\fh}$:
\begin{align*}
\gamma(C) = 1, \phantom{...}\gamma(H_i) = 0, \\
\widetilde{\omega}_i = \omega_i + (\omega_i,\widetilde{\a})\gamma, \\
\widetilde{\omega}_0 = \frac{1}{2}(\widetilde{\a},\widetilde{\a})\gamma.
\end{align*}

A representation of $\widehat{\fgc}$ is called a lowest weight representation
with weight $\lambda \in \hat{\fh}^*$ if there is a cyclic vector $v_0$ such that 
\begin{align*}
hv_0 = \lambda(h)v_0 &\mbox{ for } h \in \hat{\fh}, \\
x_+ v_0 = 0 &\mbox{ for } x_+ \in \fgc\otimes_\CC (\CC[t] \ominus \CC)\oplus \fn^+
\end{align*}

We have the following result \cite{Garland}.
\begin{theorem}[Garland]\label{integerlevel}
A lowest weight representation of $\widehat{\fgc}$ with weight $\lambda$ admits
a positive-definite contravariant form if and only if $\lambda$ is dominant integral,
namely, $\lambda(H_i) \in \NN$.
\end{theorem}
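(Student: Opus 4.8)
The plan is to reduce the statement, as far as possible, to the representation theory of $\mathfrak{sl}_2$ and to the theory of integrable modules over $\widehat{\fgc}$, viewed as an affine Kac--Moody algebra with Chevalley-type generators $e_i, f_i, h_i$ attached to the finite simple roots $\a_1,\dots,\a_\ell$ and to the affine node $\a_0 = \delta - \widetilde{\a}$, for which $h_0 = C - h_{\widetilde{\a}}$. A contravariant Hermitian form on a lowest weight module is unique up to a real scalar, the lowest weight line $\CC v_0$ being one-dimensional; on the Verma module it is the Shapovalov form, and a general lowest weight module is a quotient of the Verma module, so admitting a positive-definite contravariant form is equivalent to positive-definiteness (up to scale) of the Shapovalov form on the irreducible quotient $L(\lambda)$. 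Thus the task is to show: this form is positive-definite if and only if $\lambda(h_i) \in \NN$ for $i = 0,1,\dots,\ell$, the case $i = 0$ being the inequality between the level $\lambda(C)$ and $\lambda(h_{\widetilde{\a}})$, cf.\ \eqref{dominantgroup}.

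For the ``only if'' direction I would work one node at a time. Fixing $i$ and using $e_i v_0 = 0$, $h_i v_0 = \lambda(h_i)v_0$, the vectors $f_i^{\,n}v_0$ span an $\mathfrak{sl}_2$-submodule, and the identity $[e_i, f_i^{\,n}] = n\,f_i^{\,n-1}(h_i - n + 1)$ together with contravariance gives, by induction on $n$,
\[
\langle f_i^{\,n}v_0,\; f_i^{\,n}v_0\rangle \;=\; n!\,\lambda(h_i)\bigl(\lambda(h_i) - 1\bigr)\cdots\bigl(\lambda(h_i) - n + 1\bigr)
\]
(with $\langle v_0, v_0\rangle$ normalised to $1$). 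If $\lambda(h_i)$ is not a nonnegative integer, the right-hand side becomes negative once $n > \lambda(h_i)$, which is incompatible with positive-definiteness; hence $\lambda(h_i) \in \NN$ for every $i$, i.e.\ $\lambda$ is dominant integral.

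For the ``if'' direction, assume $\lambda$ dominant integral. The norm formula shows $f_i^{\,\lambda(h_i)+1}v_0$ has zero norm, hence lies in the radical and vanishes in $L(\lambda)$; since $\ad e_j$ and $\ad f_j$ act locally nilpotently on $\fn^{\pm}$, this propagates through $U(\widehat{\fgc})v_0$, so each $f_i$ acts locally nilpotently on $L(\lambda)$, while each $e_i$ does so automatically (it strictly raises weights and no weight of $L(\lambda)$ exceeds $\lambda$). Thus $L(\lambda)$ is integrable. Now $e_i - f_i$ is skew-adjoint for the contravariant form and locally nilpotent, so $\exp(t(e_i - f_i))$ preserves the form; the group generated by these operators preserves $\langle\cdot,\cdot\rangle$ and contains representatives of the affine Weyl group, whence positive-definiteness of the form on $L(\lambda)_\mu$ depends only on the Weyl orbit of $\mu$. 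This leaves a set of orbit representatives, which one handles by induction on $\lambda - \mu$ using the decomposition of $L(\lambda)$ into finite-dimensional modules under each $\mathfrak{sl}_2(\a_i)$, where the contravariant form is (up to scale) the standard positive-definite one. A more explicit alternative is a bootstrap: the level-one integrable modules are realised on bosonic Fock spaces by the vertex operator construction with a manifestly positive inner product, and for general dominant integral $\lambda = \sum_i n_i \widetilde{\omega}_i$ the module $L(\lambda)$ embeds, as the submodule generated by the tensor product of lowest weight vectors, into a tensor product of such level-one modules, carrying the tensor product of the positive-definite contravariant forms; a submodule inherits a positive-definite contravariant form, which by uniqueness must be the form on $L(\lambda)$. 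One could also simply invoke the existence, asserted in the Pressley--Segal classification quoted above, of the positive-energy unitary representation of $LG$ attached to $\lambda$, whose space of finite-energy vectors is $L(\lambda)$ with the Hilbert-space inner product restricting to the contravariant form.

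The reduction to $L(\lambda)$, the $\mathfrak{sl}_2$ norm computation and the verification of integrability are routine. The step I expect to be the main obstacle is the positive-definiteness for dominant integral $\lambda$: this genuinely requires input beyond formal manipulation --- either the integration of the compact real form (to obtain Weyl-invariance of the form, hence the induction over weight spaces) or the concrete positive models of the level-one modules together with the tensor-product bootstrap. It is at this point that Garland's argument, or a sign analysis of the Kac--Kazhdan determinant, does the real work.
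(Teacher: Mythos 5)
The paper offers no proof of this theorem: it is imported wholesale from Garland's article, so there is no argument of the author's to set your proposal against. Judged on its own terms, your outline is the standard modern proof (closer to Kac's treatment via integrable modules than to Garland's original, more arithmetic, argument), and the ``only if'' half is essentially complete: the $\mathfrak{sl}_2$ computation $\|f_i^{\,n}v_0\|^2=n!\,\lambda(h_i)(\lambda(h_i)-1)\cdots(\lambda(h_i)-n+1)$ at each node does force $\lambda(h_i)\in\NN$. You are also right to read the dominance condition as including the affine node $h_0=C-h_{\widetilde{\a}}$: as literally written in the paper ($\{H_i\}$ a basis of the finite Cartan subalgebra only) the hypothesis omits the level inequality $\lambda(C)\ge\lambda(h_{\widetilde{\a}})$, without which the statement is false.

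The gap --- which, to your credit, you flag yourself --- is the positive-definiteness for dominant integral $\lambda$, and none of your three suggested routes closes it as written. First, in the Weyl-invariance-plus-induction route you do not say how to treat a vector $v\in L(\lambda)_\mu$ that is not extremal for any single $\mathfrak{sl}(\a_i)$; the cross terms between the various $\mathfrak{sl}_2$-decompositions are precisely the difficulty, and the standard repair is the Casimir identity, which expresses $\bigl((\lambda+2\rho,\lambda)-(\mu+2\rho,\mu)\bigr)\|v\|^2$ as a sum of norms of vectors of strictly higher weight, giving semidefiniteness by induction on $\mathrm{ht}(\mu-\lambda)$ and then definiteness on the irreducible quotient; note that this requires extending $\hat{\fh}$ by the scaling element $d$ so that the invariant form is nondegenerate. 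Second, the level-one bootstrap works as stated only for simply laced $\fg$ (Frenkel--Kac), and even there not every dominant integral weight of level $k$ is a sum of $k$ level-one dominant weights (for $E_8$ the basic weight is the only level-one one), so $L(\lambda)$ need not arise as the cyclic submodule generated by the tensor product of top vectors; one would need it to appear as a direct summand, hence complete reducibility of the tensor product, which is essentially the theorem being proved. Third, invoking the Pressley--Segal group-level construction outsources the entire content. So the skeleton and the location of the difficulty are correct, but the hard implication remains a citation rather than a proof --- which, to be fair, is exactly how the paper itself treats it.
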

Furthermore, in this case the representation is unitary, and admits the action of $S^1$ as
rotation. Moreover, all such representations integrate to positive-energy projective
representations of $LG$. There is a one-to-one correspondence between dominant integral
condition above and (\ref{dominantgroup}). We have already seen that group representations
are classified by the level and the weight satisfying (\ref{dominantgroup}), so
there is a one-to-one correspondence between lowest weight irreducible representations
of $\widehat{\fgc}$ and positive-energy projective unitary irreducible representations of $LG$.

\section{Preliminaries on the Schwartz class algebra $\sch\fg$}
As noted in the introduction, we will consider an analogous problem on
infinite dimensional Lie algebras defined through the real line $\RR$, instead of $S^1$.
We identify the circle $S^1$ as the one-point compactification of the real line $\RR$
by the Cayley transform:
\[
t = i\frac{1+z}{1-z} \Longleftrightarrow z = \frac{t-i}{t+i}, \phantom{...}t \in \RR,
\phantom{..}z \in S^1 \subset \CC.
\]

The Lie algebra $\fg$ is finite dimensional, hence for a map from $\RR$ into
$\fg$ we can define the rapidly decreasing property. As one of the simplest formulations,
we take the following: Let $n$ be the dimension of $\fg$.
By fixing a basis in $\fg$, we can consider any map
$\xi:\RR \to \fg$ as the $n$-tuple of real-valued functions.
Then we say $\xi$ is rapidly decreasing if each component
of $\xi$ is rapidly decreasing. Of course this definition does not depend
on the chosen basis. It is also straightforward to define a tempered
distribution on $\sch\fg$. A linear functional $\varphi$ is said to be
tempered if each restriction of $\varphi$ to the subspaces of elements
having nonzero value only on $i$-th component is a tempered distribution.
Again this definition is independent of the choice of basis.

The main object of this paper is the following.
\begin{align*}
\sch\fg := \{\xi:\RR \to \fg, \mbox{smooth, rapidly decreasing}\}, \\
[\xi,\eta](t) := [\xi(t),\eta(t)], t \in \RR
\end{align*}
namely, the algebra of Schwartz class elements. Under the identification
of the punctured circle and the real line, it is easy to see that this algebra is
a closed subalgebra of $L\fg$. It is easy to see that as linear spaces
$\sch\fg = \fg \otimes \sch$ and the Lie algebra operation is
$[x\otimes f, y\otimes g] = [x,y]\otimes fg$.

The compact group $G$ acts on $\fg$ by the adjoint action, hence also on
$L\fg$ by the pointwise application. This action is smooth \cite[Section 3.2]{PS}.
Since $\sch\fg$ is a closed subalgebra of $L\fg$, the restricted action
of $G$ on $\sch\fg$ is also smooth. It is obvious that $\sch\fg$ is invariant under
$G$.

We are interested in positive-energy, unitary, projective representations.
Recall that for $L\fg$ we considered the subalgebra of polynomial loops and all these notions
are defined in purely algebraic terms.
For $\sch\fg$ we cannot take such an appropriate subalgebra.
Instead, we need to formulate all these properties of representations
with analytic terms from the beginning. Let $\H$ be a Hilbert space.
Note this time that $\RR$ acts on $\sch\fg$ by translation:
\[
\xi_a(t) := \xi(t-a).
\]
Again it is straightforward to define the complexification of $\sch\fg$ and
it is identified with $\sch\fgc$. The $*$-operation is naturally defined.

\begin{definition}
A projective unitary representation $\pi$ with a 2-cocycle $\omega$ of $\sch\fgc$
assigns to any element $\xi$ of $\sch\fgc$
a (possibly unbounded) linear operator $\pi(\xi)$ on $\H$ such that
there is a common dense domain $V \subset \H$ for all $\{\pi(\xi):\xi \in \sch\fgc\}$
and on $V$ it holds that
\begin{align*}
\pi([\xi,\eta])v = \left(\pi(\xi)\pi(\eta)-\pi(\eta)\pi(\xi) + \omega(\xi,\eta)\right)v, \\
\<\pi(\xi)v_1,v_2\> = \<v_1,\pi(\xi^*)v_2\>.
\end{align*}
A projective unitary representation of $\sch\fgc$ is said to have positive energy
if there is a unitary representation $U$ of $\RR$ with positive spectrum such that
$U(a)\pi(\xi)U(a)^* = \pi(\xi_a)$.

A projective unitary representation of $\sch\fgc$ is said to be smooth if for each
$v_1,v_2$ in the common domain $V$ the linear form $\<\cdot v_1,v_2\>$ is
tempered.
\end{definition}
\begin{remark}\label{invariancetrans}
Let us make some remarks.
By the same reason as in Remark \ref{invariancerot}, we can define an action of
translation on the space of 2-cocycles on $\sch\fgc$ and for a positive energy
representation the cocycle is invariant under translation.

If we have a representation of a group, it is natural to
ask if this representation produces a representation of the Lie algebra by an appropriate
derivation. And for $LG$ the answer is yes. We can prove the existence of a
common domain by utilizing finite dimensional subgroups in $LG$ with common
elements (\cite[Section 1.8]{Wassermann} or \cite[Appendix]{Carpi}).
We can define a corresponding group for $\sch\fg$,
but it is not clear if such a common domain exists for a representation of $\sch\fg$.
We will discuss on this problem in the final section.

There is also a problem on the smoothness of the representations.
As explained in the final section, in the algebraic approach to CFT
it is natural to consider the subalgebra of $\sch\fgc$ with compact support.
On the other hand, for the moment we know the proof of uniqueness of
ground state representations only for Schwartz class algebra. For the present
proof it is essential since we exploit the Fourier transforms. Unfortunately
we don't know to what extent it is natural to assume the continuity to the Schwartz
class.
\end{remark}

\section{Uniqueness of translation invariant 2-cocycle}
As we have seen in Remark \ref{invariancetrans}, for a positive-energy representation
the cocycle is always translation-invariant. Then we will restrict the
consideration to translation-invariant cocycles.
In this section, we will show that the Lie algebra $\sch\fgc$ has the unique
translation-covariant central extension.
First of all, we can define an action of $G$ on the space of cocycles by
\[
(g\omega)(\xi,\eta) := \omega(g^{-1}\xi,g^{-1}\eta).
\]
We show that we can restrict the consideration to $G$-invariant cocycles.

\begin{lemma}\label{locality}
Any 2-cocycle $\omega$ on $\sg$ is local, namely,
if $\xi$ and $\eta$ have disjoint supports, then $\omega(\xi,\eta) = 0$.
\end{lemma}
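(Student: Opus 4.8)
The plan is to show locality by a direct computation with the cocycle identity, exploiting that the Lie algebra $\sg = \fgc \otimes \sch$ has enough structure to write elements supported in disjoint regions as brackets. First I would fix $\xi, \eta \in \sg$ with $\supp\xi \cap \supp\eta = \emptyset$, and choose open sets $U \supset \supp\xi$ and $W \supset \supp\eta$ which are still disjoint. The key algebraic input is that $\fg$ is simple, hence $[\fgc,\fgc] = \fgc$; combining this with a partition-of-unity argument on $\RR$ (multiplying by smooth bump functions supported in $U$), every element supported in $U$ can be written as a \emph{finite} sum of brackets $[\zeta_1, \zeta_2]$ with $\zeta_1, \zeta_2$ again supported in $U$. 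Concretely, writing $\xi = \sum_k x_k \otimes f_k$ with $\supp f_k \subset U$, pick $h \in \sch$ with $h \equiv 1$ on $\supp f_k$ and $\supp h \subset U$, and decompose each $x_k$ as a sum of brackets $[y_j, z_j]$ in $\fgc$; then $x_k \otimes f_k = \sum_j [\,y_j \otimes (h f_k)\,,\, z_j \otimes h\,]$.

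Next I would use the cocycle (Jacobi) identity \eqref{jacobi}. For a single bracket, \eqref{jacobi} rearranges to
\[
\omega([\zeta_1,\zeta_2],\eta) = \omega([\zeta_2,\eta],\zeta_1) + \omega([\eta,\zeta_1],\zeta_2).
\]
Now if $\zeta_1, \zeta_2$ are supported in $U$ and $\eta$ is supported in $W$ with $U \cap W = \emptyset$, then $[\zeta_2,\eta] = 0$ and $[\eta,\zeta_1] = 0$ pointwise, because at every point at least one factor vanishes. Hence the right-hand side is $0$, so $\omega([\zeta_1,\zeta_2],\eta) = 0$. Applying this to each summand of the decomposition of $\xi$ from the previous paragraph, and using bilinearity of $\omega$, gives $\omega(\xi,\eta) = 0$.

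The only real obstacle is making the decomposition "every $\xi$ supported in $U$ is a finite sum of brackets of elements supported in $U$" airtight at the level of Schwartz functions — one must check that the bump functions $h$ and products $h f_k$ stay in $\sch$ (clear, since $\sch$ is an algebra closed under multiplication by bounded smooth functions with bounded derivatives, and one can take $h$ with such properties) and that the sum is genuinely finite (it is: $\xi$ is a finite $\fgc$-linear combination since $\fgc$ is finite-dimensional, and each $x_k \in \fgc = [\fgc,\fgc]$ is a finite sum of brackets). I do not expect any analytic subtlety beyond this, since no limits are taken; the argument is purely algebraic once the disjointness of supports is arranged. I would then remark that the same proof shows $\omega(\xi,\eta) = 0$ whenever merely $[\xi,\eta] = 0$ pointwise and one of $\xi,\eta$ lies in the ideal generated by the other's support-neighborhood, but the disjoint-support statement is all that is needed in the sequel.
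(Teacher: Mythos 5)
Your argument is correct and is essentially the paper's own proof: both hinge on the cocycle Jacobi identity applied with an auxiliary bump function $h$ equal to $1$ near one support and vanishing near the other, together with simplicity of $\fgc$ (i.e.\ $[\fgc,\fgc]=\fgc$) to reach all Lie-algebra components, and your sign in the rearranged identity is off but harmlessly so since both right-hand terms vanish identically. The one caveat, which the paper itself shares by explicitly restricting to compact supports before asserting the spanning claim, is that an $h$ with $h\equiv 1$ on $\supp f_k$ and $\supp h\subset U$ lies in $\sch$ only when $\supp f_k$ is compact; this compactly supported case is the only one used in the sequel.
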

\begin{proof}
Let us take $x,y,z \in \fgc$ and $f,g,h \in \sch(\RR)$. Then by the Jacobi
identity (\ref{jacobi}),
\begin{eqnarray*}
0 &=&
\omega([x\otimes f,y\otimes g],z\otimes h) + \omega([y\otimes g,z\otimes h],x\otimes f)
+\omega([z\otimes h, x\otimes f], y\otimes g) \\
&=& \omega([x,y]\otimes fg,z\otimes h) + \omega([y,z]\otimes gh,x\otimes f) +
\omega([z,x]\otimes hf, y\otimes g).
\end{eqnarray*}
Here, let the supports of $f$ and $g$ be disjoint and compact,
and $h$ be a function such that
$h(t) = 1$ on $\supp(f)$ and $h(t) = 0$ on $\supp(g)$. Then
the equality above transforms into
\[
\omega([z,x]\otimes f,y\otimes g) = 0.
\]
Since $\fgc$ is simple, $[z,x]$ spans the whole Lie algebra $\fgc$ and the lemma is
proved by noting that these elements span elements with compact support.
\end{proof}

\begin{lemma}
Any translation-invariant continuous 2-cocycle $\omega$ on $\sg$ is equivalent up to coboundary
to a $G$-invariant cocycle.
\end{lemma}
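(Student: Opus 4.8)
The plan is to produce the required $G$-invariant cocycle by averaging $\omega$ over $G$ and then to show that this average is cohomologous to $\omega$. Set
\[
\bar\omega(\xi,\eta):=\int_G\omega(g^{-1}\xi,g^{-1}\eta)\,dg ,
\]
the integral with respect to normalized Haar measure; the integrand is continuous in $g$ because the $G$-action on $\sg$ is smooth, and $G$ is compact, so $\bar\omega$ is well defined and continuous. Antisymmetry and the Jacobi identity \eqref{jacobi} being linear conditions, $\bar\omega$ is again a $2$-cocycle; it is $G$-invariant by invariance of Haar measure; and it is translation-invariant, since the $G$-action acts on the values of a loop while translation acts on its argument, so the two commute. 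Thus $\bar\omega$ is the candidate cocycle, and everything reduces to proving $\omega\equi\bar\omega$, i.e.\ that $\bar\omega-\omega$ is a coboundary.

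For this I would use the connectedness of $G$. Given $Y\in\fg$, write $D_Y$ for the derivation of $\sg$ with $(D_Y\xi)(t):=[Y,\xi(t)]$, i.e.\ the infinitesimal generator of the $G$-action (it commutes with translations). Differentiating, $\bar\omega-\omega$ will be a coboundary once one knows that for every $Y$ the bilinear form
\[
\omega_Y(\xi,\eta):=\omega(D_Y\xi,\eta)+\omega(\xi,D_Y\eta)
\]
is a coboundary $\omega_Y=d\nu_Y$ with $\nu_Y\in\sg^*$ depending linearly on $Y$. Indeed $\omega_Y$ is again a $2$-cocycle, being the derivative of $\omega$ under the flow of $D_Y$ (one also checks this straight from \eqref{jacobi}); and if $\omega_Y=d\nu_Y$, then differentiating the path $s\mapsto(\exp sY)\omega$ gives $(\exp Y)\omega-\omega=-\int_0^1(\exp sY)\omega_Y\,ds=-d\!\int_0^1(\exp sY)\nu_Y\,ds$, because pullback by an automorphism commutes with the Chevalley--Eilenberg differential. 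Hence $(\exp Y)\omega\equi\omega$ for all $Y$; since $G$ is connected it is generated by such elements and ``being cohomologous'' is stable under the $G$-action, so $g\omega\equi\omega$ for every $g\in G$, and integrating the corresponding coboundaries over $g$ yields $\bar\omega-\omega=d\nu$ for a suitable $\nu\in\sg^*$.

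It remains to construct $\nu_Y$, and this is the crux. The derivation $D_Y$ is not inner on $\sg$, since the constant loop $Y$ is not of Schwartz class, but it is inner ``at each finite scale'': choose $h_n\in\sch(\RR)$ with $0\le h_n\le1$, $h_n\equiv1$ on $[-n,n]$, $\supp h_n\subset[-n-1,n+1]$ and all derivatives bounded uniformly in $n$; then $\ad(Y\otimes h_n)$ agrees with $D_Y$ on every element of $\sg$ supported in $[-n,n]$. By the Cartan formula for Lie algebra cohomology, the inner derivation $\ad(Y\otimes h_n)$ sends the closed form $\omega$ to $d\mu_n$ with $\mu_n(\eta)=\omega(Y\otimes h_n,\eta)$, so I would set $\nu_Y(\eta):=\pm\lim_{n\to\infty}\omega(Y\otimes h_n,\eta)$. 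For $\eta$ of compact support this sequence is eventually constant, which gives $d\nu_Y=\omega_Y$ on compactly supported pairs and hence, by continuity, on all of $\sg$ once $\nu_Y$ is known to be continuous. The main obstacle is precisely the existence and continuity of this limit for $\eta$ of non-compact support: for $m<n$ the difference $h_n-h_m$ is supported in $\{\,m\le|t|\le n+1\,\}$, so by Lemma \ref{locality} one may replace $\eta$ there by $\eta$ times a cut-off vanishing on $[-m+1,m-1]$, and then the rapid decay of $\eta$, the continuity of $\omega$ and the uniform bounds on the $h_n$ combine to force $\omega(Y\otimes(h_n-h_m),\eta)\to0$ as $m\to\infty$ uniformly in $n$; this makes $(\mu_n(\eta))_n$ Cauchy, the limit independent of the chosen sequence $(h_n)$, and $\nu_Y$ a tempered functional, linear in $Y$. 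The two ingredients that make this estimate work are locality (Lemma \ref{locality}), which disposes of the non-compactly-supported part of $\eta$, and the continuity of $\omega$.
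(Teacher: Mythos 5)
Your proposal follows essentially the same route as the paper: average over $G$, reduce by connectedness to showing that the infinitesimal $\fg$-action changes $\omega$ by a coboundary, realize that action locally as an inner derivation $\ad(Y\otimes h)$ with $h$ a plateau function, and invoke Lemma \ref{locality} to make the resulting $1$-cochain well defined on compactly supported elements. The paper does exactly this, with a general path $\a(t)$ from $e$ to $g$ in place of your one-parameter subgroups, and with $\delta(t)=x\otimes f$, $f\equiv 1$ on $\supp(\xi)\cup\supp(\eta)$, in place of your $Y\otimes h_n$. The one genuine divergence is the extension of the primitive from $\dg$ to $\sg$: the paper observes that its $\g_t$ is a translation-invariant continuous functional on $\dg$, hence of the explicit form $\int_\RR\psi(\xi(s))\,ds$, which manifestly extends to Schwartz elements; you instead prove directly that $\omega(Y\otimes h_n,\eta)$ is Cauchy using the rapid decay of $\eta$. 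Your variant has the merit of not using translation invariance at this step, but as written the claim that $\omega(Y\otimes(h_n-h_m),\eta)\to 0$ uniformly in $n$ does not follow from a single application of joint continuity: the relevant Schwartz seminorm of $Y\otimes(h_n-h_m)$ carries weights $|t|^k$ and so grows polynomially in $n$, while the decay gained by cutting $\eta$ off at $|t|\ge m-1$ is only in $m$. You need to telescope, writing $h_n-h_m=\sum_{j=m}^{n-1}(h_{j+1}-h_j)$ and localizing $\eta$ near the support of each summand so that the $j$-th term is $O(j^{k-N})$ for every $N$; the tails then vanish uniformly in $n$. With that refinement (or by borrowing the paper's translation-invariance shortcut) your argument is complete.
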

\begin{proof}
We see that $g\omega - \omega$ is coboundary for any $g \in G$.
Since $G$ is connected, we can take a smooth path $\a$ such that
$\a(0) = e$ and $\a(1) = g$. Then by the fundamental theorem of analysis
it holds that
\begin{eqnarray*}
g\omega(\xi,\eta)-\omega(\xi,\eta) &=& \a(1)\omega(\xi,\eta) - \a(0)\omega(\xi,\eta) \\
&=& \int_0^1 \frac{d}{dt} \omega(\a^{-1}(t)\xi,\a^{-1}(t)\eta) dt.
\end{eqnarray*}

For the moment, let us assume that $\xi$ and $\eta$ have compact supports.
Then there are elements $\delta(t)$ with support compact such that
\[
\frac{d}{dt}\a^{-1}(t)\xi = [\delta(t),\a^{-1}(t)\xi],\phantom{...}
\frac{d}{dt}\a^{-1}(t)\eta = [\delta(t),\a^{-1}(t)\eta].
\]
In fact, it is enough to take an element of the form $x\otimes f$,
where $x = \a^{-1}(t)^\prime$ and $f(t)=1$ on $\supp(\xi)\cup\supp(\eta)$.

Let $\dg$ be the subalgebra of $\sg$ of elements with compact support.
We define $\g_t: \dg \to \CC$ by $\g_t(\xi) = \omega(\a^{-1}(t)\xi,\delta(t))$,
where $\delta(t)$ depends on $\xi$ as above. And this is well defined because
$\omega$ is local by Lemma \ref{locality} and the variation of $\delta(t)$ outside
the support of $\xi$ does not change $\g_t$. Then $\g_t$ is translation-invariant
since $\delta(t)$ is defined in a translation-invariant way and $\omega$ is
translation-invariant by assumption. And $\g_t$ is continuous since $\omega$ is
continuous by assumption and $\delta(t)$ is defined locally as an element
in $\dg$ and $\omega$ is local. Then $\dg$ is the finite direct sum
of test function spaces as a topological linear space, hence any translation-invariant
continuous linear functional on this space is of the form
\[
\int_\RR \psi(\xi(s))ds,
\]
where $\psi$ is a linear functional on $\fgc$. Now it is obvious that $\g_t$ extends
to $\sg$ by continuity.

As above, let $\xi,\eta$ be elements with compact support. By the continuity of
$\omega$ and the Jacobi identity (\ref{jacobi}) we see that
\begin{eqnarray*}
\frac{d}{dt}\omega(\a^{-1}(t)\xi,\a^{-1}(t)\eta)
&=& \omega([\delta(t),\a^{-1}(t)\xi],\a^{-1}(t)\eta)
+ \omega(\a^{-1}(t)\xi,[\delta(t),\a^{-1}(t)\eta]) \\
&=& -\omega([\a^{-1}(t)\xi,\a^{-1}(t)\eta],\delta(t)) \\
&=& -\omega(\a^{-1}(t)[\xi,\eta],\delta(t)) \\
&=& -\g_t([\xi,\eta]).
\end{eqnarray*}
Now this equation extends to $\sg$ since $\omega, \g,\a^{-1}$ are continuous.
In short, we have
\[
g\omega(\xi,\eta) - \omega(\xi,\eta) = -\int_0^1 \g_t([\xi,\eta]) dt,
\]
which shows that the difference between two cocycles is a linear functional
of $[\xi,\eta]$, thus it is a coboundary.

Finally, obviously the averaged cocycle
\[
\int_G g\omega dg
\]
is a $G$-invariant cocycle. And the difference
\[
\int_G (g\omega - \omega) dg
\]
is a coboundary since the integrand is a coboundary.
\end{proof}

Then we can show that the translation-invariant 2-cocycle on $\sg$ is essentially unique.
\begin{theorem}
If a translation-invariant continuous 2-cocycle $\omega$ is $G$-invariant, then $\omega(\xi,\eta)$
is proportional to the following one.
\[
\int \frac{1}{2\pi i}\<\xi(t),\eta^\prime(t)\> dt.
\]
\end{theorem}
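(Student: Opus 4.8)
The plan is to reduce $\omega$ to a normal form using the Schwartz kernel theorem together with translation invariance, locality, and continuity, then to cut down the remaining freedom with simplicity of $\fgc$, antisymmetry, and the Jacobi identity, and finally to identify the surviving term by a Fourier transform.

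Since $\sg=\fgc\otimes\sch(\RR)$ and $\omega$ is continuous and bilinear, the Schwartz kernel theorem represents it by a tempered-distribution kernel, $\omega(x\otimes f,\,y\otimes h)=\langle K_{x,y},\,f\otimes h\rangle$ with $K_{x,y}\in\sch'(\RR^2)$ depending bilinearly on $x,y\in\fgc$. Translation invariance of $\omega$ forces $K_{x,y}$ to be invariant under $(s,t)\mapsto(s+a,t+a)$, hence of the form $K_{x,y}(s,t)=L_{x,y}(s-t)$ for some $L_{x,y}\in\sch'(\RR)$; Lemma~\ref{locality} then forces $L_{x,y}$ to be supported at the origin, hence to be a finite linear combination of derivatives of $\delta$ (with order bounded uniformly in $x,y$, as $\fgc$ is finite dimensional). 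Unwinding the pairing, this yields
\[
\omega(x\otimes f,\,y\otimes h)=\sum_{k=0}^{N}c_k(x,y)\int_{\RR} f^{(k)}(t)\,h(t)\,dt,
\]
where each $c_k$ is a uniquely determined bilinear form on $\fgc$.

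Now $G$-invariance of $\omega$ makes each $c_k$ a $G$-invariant bilinear form on $\fgc$, so by simplicity of $\fgc$ one has $c_k=\mu_k\<\cdot,\cdot\>$ for scalars $\mu_k$; thus $\omega(x\otimes f,y\otimes h)=\<x,y\>\,T(f,h)$ with $T(f,h)=\sum_k\mu_k\int f^{(k)}h\,dt$. Antisymmetry of $\omega$ together with integration by parts gives $T(f,h)=-T(h,f)$, which kills the $\mu_k$ with $k$ even. Evaluating the Jacobi identity (\ref{jacobi}) on $x\otimes f,\ y\otimes h,\ z\otimes\ell$ and using that $\<[x,y],z\>$ is cyclic-symmetric (by invariance of $\<\cdot,\cdot\>$) and not identically zero, one obtains the cyclic identity
\[
T(fh,\ell)+T(h\ell,f)+T(\ell f,h)=0\qquad(f,h,\ell\in\sch(\RR)).
\]
Passing to Fourier transforms via $\int f^{(k)}h\,dt=\frac{1}{2\pi}\int(ip)^k\hat f(p)\hat h(-p)\,dp$ and $\widehat{fh}=\frac{1}{2\pi}\hat f*\hat h$, and using that $\hat f,\hat h,\hat\ell$ can be taken to be arbitrary Schwartz functions, this identity is equivalent, for the polynomial $P(s):=\sum_k\mu_k s^k$, to
\[
P\bigl(i(u+v)\bigr)+P(-iu)+P(-iv)=0\qquad\text{for all }u,v\in\RR,
\]
which (after noting $P(0)=0$ and that $P$ is odd) says $P(i(u+v))=P(iu)+P(iv)$, i.e.\ $P$ is additive, hence linear, $P(s)=\mu_1 s$. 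Therefore $T(f,h)=\mu_1\int f'(t)h(t)\,dt=-\mu_1\int f(t)h'(t)\,dt$, so $\omega(x\otimes f,y\otimes h)=-2\pi i\,\mu_1\cdot\frac{1}{2\pi i}\<x,y\>\int f(t)h'(t)\,dt$, which is proportional to $\int\frac{1}{2\pi i}\<\xi(t),\eta'(t)\>\,dt$.

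The main obstacle is the first step: rigorously extracting the normal form $\omega(x\otimes f,y\otimes h)=\sum_k c_k(x,y)\int f^{(k)}h\,dt$ from continuity, translation invariance, and locality, by combining the kernel theorem, the description of diagonal-translation-invariant tempered distributions, and the structure of distributions supported at a point. After that, the remaining steps are routine — the one-dimensionality of the space of invariant bilinear forms on a simple Lie algebra, an integration by parts, the cyclic symmetry of the invariant form, and the elementary fact that an additive polynomial is linear.
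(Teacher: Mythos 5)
Your proof is correct, but it reaches the normal form $\omega(x\otimes f,y\otimes h)=\<x,y\>\,T(f,h)$ with $T(f,h)\propto\int f'h\,dt$ by a genuinely different route than the paper. The paper also first reduces to a scalar bilinear form $\g(f,h)$ on $\sch(\RR)$ via $G$-invariance and simplicity, and also derives the cyclic identity $\g(fh,\ell)+\g(h\ell,f)+\g(\ell f,h)=0$; but from there it works entirely with Fourier \emph{series}: it fixes $f$ supported in $[-\tfrac a2,\tfrac a2]$, introduces the truncated exponentials $e_k$, proves $\g(f,e_k)=k\,\g(fe_{k-1},e_1)$ by induction from the cyclic identity and locality, identifies $\g(\cdot,e_1)$ as a multiple of $f\mapsto\int e^{2\pi it/a}f(t)\,dt$ using translation invariance, and then sums the Fourier expansion of $h$ to get $\g(f,h)\propto\int h'f\,dt$, finally removing the compact-support restriction by continuity. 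You instead invoke the Schwartz kernel theorem, the description of diagonally translation-invariant tempered distributions, and the fact that a distribution supported at a point is a finite sum of derivatives of $\delta$, which turns the Jacobi identity into the polynomial functional equation $P(i(u+v))+P(-iu)+P(-iv)=0$ and hence $P(s)=\mu_1 s$. Your version isolates cleanly where each hypothesis enters (continuity $\to$ kernel, translation invariance $\to$ convolution kernel, locality $\to$ point support, Jacobi $\to$ additivity), at the cost of importing distribution-theoretic structure theorems; the paper's version is more elementary and self-contained but hides the finiteness of the derivative order inside the Fourier-series induction. Two small points to make explicit if you write this up: joint continuity of $(f,h)\mapsto\omega(x\otimes f,y\otimes h)$, needed for the kernel theorem, follows from separate continuity because $\sch(\RR)$ is Fr\'echet; and Lemma \ref{locality} as proved covers compactly supported disjoint elements, which suffices to conclude that the kernel vanishes on test functions supported off the diagonal (also note the antisymmetry step is redundant, since the functional equation at $v=0$ already forces $P$ to be odd).
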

\begin{proof}
We fix Schwartz class functions $f,g \in \mathscr{S}(\RR)$.
We define a bilinear form on $\fgc$
\[
\omega_{f,g}(x,y) := \omega(x\otimes f,y\otimes g), \phantom{...}x,y \in \fgc.
\]
Obviously, $\omega_{f,g}$ is $G$-invariant. Then, since $G$ is simple, it is known that
(see, for example, \cite[Chapter 2]{PS}) any $G$-invariant bilinear form
on $\fgc$ is proportional to the Killing form. The factor depends on
$f$ and $g$ obviously in a linear way. Hence we find
$\omega_{f,g}(x,y) = \<x,y\>\gamma(f,g)$, where $\gamma(f,g)$ is a
bilinear form on $\sch(\RR)$.

Applying the Jacobi identity (\ref{jacobi}) to three elements
$x\otimes f, y\otimes g, z\otimes h$, we see the following.
\begin{eqnarray*}
0 &=& \omega([x\otimes f, y\otimes g],z\otimes h)
 + \omega([y\otimes g, z\otimes h], x\otimes f)
 + \omega([z\otimes h, x\otimes f], y\otimes g) \\
 &=& \omega([x,y]\otimes fg, z\otimes h)
 + \omega([y,z]\otimes gh, x\otimes f)
 + \omega([z,x]\otimes hf, y\otimes g) \\
 &=& \<[x,y],z\>\g(fg,h) + \<[y,z],x\>\g(gh,f) + \<[z,x],y\>\g(hf,g).
\end{eqnarray*}
By the invariance of the Killing form, we have $-\<[x,y],z\> = \<y,[x,z]\>$ and
$\<[y,z],x\> = -\<z,[y,x]\>$. By the symmetry of the Killing form, it holds that
\[
0 = \<[x,y],z\>\left(\g(fg,h)+\g(gh,f)+\g(hf,g)\right).
\]
Then by choosing appropriate $x,y,z$ we see
\begin{equation}\label{jacobifunctions}
\g(fg,h)+\g(gh,f)+\g(hf,g) = 0.
\end{equation}

Let $f$ and $g$ be functions with disjoint supports $\supp(f) \cap \supp(g) = \emptyset$,
and let $h$ be a function such that $h(t) = 1$ on $\supp(f)$ and $h(t) =  0$ on
$\supp(g)$. By (\ref{jacobifunctions}), we have $\g(0,h)+\g(0,f)+\g(f,g) = \g(f,g) = 0$.
Namely, if the supports of $f$ and $g$ are disjoint, then $\g(f,g) = 0$. We call
this property the locality of $\g$.

Now we fix a smooth function $f$ with a compact support
$\supp(f) \subset [-\frac{a}{2},\frac{a}{2}]$. For $k \in \ZZ$, let $e_k$ be a
smooth function with a compact support such that on $[-\frac{a}{2},\frac{a}{2}]$
it holds that $e_k(t) = e^{\frac{i2\pi tk}{a}}$.

Let $g$ be some function in $\sch(\RR)$. By the locality of $\g$, the value of
$\g(f,g)$ does not depend on the form of $g$ outside the support of $f$. Then
again by the Jacobi identity for $\g$ we see $\g(fe_k,e_1)+\g(e_{k+1},f)+\g(fe_1,e_k) = 0$
or equivalently, 
\[
\g(f,e_{k+1}) = \g(fe_k,e_1) + \g(fe_1,e_k),
\]
because values of functions outside the support of $f$ do not affect
the value of $\g$. Repeating this equality replacing $f$ by $fe_1$ and $k$
by $k-1$, we have
\begin{equation*}
\g(f,e_{k+1}) = \g(fe_k,e_1) + \g(fe_k,e_1) + \g(fe_2,e_{k-1}).
\end{equation*}
It is easy to see that $\g(f,e_0) = 0$. By induction it holds for $k\ge1$ that
\begin{equation}\label{induction}
\g(f,e_k) = k\g(fe_{k-1},e_1).
\end{equation}
A similar argument holds also for $k \le 0$.

We define $\varphi(f) := \g(f,e_1)$. By the translation-invariance of $\g$,
we see that $\varphi(f) = c_a\int e^{\frac{i2\pi t}{a}}f(t)dt$ for some constant $c_a \in \CC$.
Then by the equality (\ref{induction}) we have
\[
\g(f,e_k) = c_a \int ke^{\frac{i2\pi tk}{a}}f(t)dt.
\]
Then for a smooth function $g$ with support in $[-\frac{a}{2},\frac{a}{2}]$,
by considering its Fourier expansion $g(t) = \sum_k e^{\frac{i2\pi tk}{a}}g_k$, it holds that
\[
\g(f,g) = \frac{c_a a}{2\pi i} \int g^\prime(t)f(t)dt.
\]
But the interval $[-\frac{a}{2},\frac{a}{2}]$ is in reality arbitrary,
then $c_a a$ does not depend on $a$ and
this equality holds for any compact support functions. Then by the
continuity of $\g$ it holds also for Schwartz class functions.
\end{proof}

We take the following as the standard normalization.
\[
\omega_1(\xi,\eta) := \frac{1}{2\pi i}\int \<\xi(t),\eta^\prime(t)\>dt.
\]
We say that a positive-energy representation has level $c$ if
its cocycle is $c\omega_1$.

\section{Uniqueness of ground state representations}\label{uniqueness}
First of all, let us specify the class of representation in which we are
interested.
\begin{definition}
If a smooth positive-energy unitary projective representation $\pi$ of
$\sg$ on the common domain $V \subset \H$ has a unique vector $\Omega$
(up to scalar) such
that $\Omega$ is invariant under the unitary implementation $U$ of the
translation and $V$ is algebraically generated by $\Omega$,
then $\pi$ is said to be a ground state representation.
\end{definition}

Throughout this section, $\pi$ is a ground state representation of
$\sg$ on $\H$, with a common domain $V$, $\Omega$ is the ground state
vector, and $U$ is the one-parameter group of unitary operators which
implements the translation.

Note that any vacuum representation of $L\fgc$ is a ground state representation.
Any vacuum representation of $L\fgc$ with a different value of
the cocycle corresponds to a different conformal field theory.
As the title of the present article suggests,
we want to show the uniqueness of ground state
for a CFT. In other words, any ground state representation of $\sch\fgc$ with
a fixed cocycle is the vacuum representation.

Note also that we assume from the beginning that the ground state vector
$\Omega$ is cyclic and unique. Since we need to treat unbounded operators,
it is not convenient to discuss decomposition of representations.
We will return to this point in the final section.

Let us start with several observations similar
to the classical argument in \cite{FST}, which is originally given by
L\"uscher and Mack in their unpublished article.
Let $E$ be the spectral measure associated with $U$.
If $g$ is a smooth bounded function on $\RR$, we denote by $g(U)$
the functional calculus associated with $E$, defined by
\begin{align*}
U(a) = \int e^{i2\pi pa} dE(p) \mbox{ for } a \in \RR, \\
g(U) = \int g(p) dE(p).
\end{align*}

\begin{lemma}\label{spectralmeasure}
If the Fourier transform $\hat{f}$ of $f \in \sch$ has support
in $\RR_+$, then it holds for any $x \in \fgc$ that $\pi(x\otimes f)\Omega = 0$.
\end{lemma}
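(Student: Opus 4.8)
The plan is to run the L\"uscher--Mack style argument and show that $w:=\pi(x\otimes f)\Omega$ is annihilated by every $v$ in the common dense domain $V$. Since $\Omega$ is the cyclic generator it lies in $V$, so $w$ is well defined; moreover $U(a)\pi(x\otimes f)U(a)^{*}=\pi(x\otimes f_{a})$ together with $U(a)\Omega=\Omega$ gives $U(a)w=\pi(x\otimes f_{a})\Omega$ for all $a\in\RR$. Fixing $v\in V$, I consider the bounded continuous function
\[
g_{v}(a):=\langle U(a)w,v\rangle=\langle\pi(x\otimes f_{a})\Omega,v\rangle
\]
and compute $\supp\hat g_{v}$ in two ways.

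On the one hand, writing $U(a)=\int e^{i2\pi pa}\,dE(p)$ with $E$ supported in $[0,\infty)$ by positive energy, $g_{v}(a)=\int_{[0,\infty)}e^{i2\pi pa}\,d\langle E(p)w,v\rangle$, so $\hat g_{v}$ is supported in $[0,\infty)$. On the other hand, by smoothness there is a tempered distribution $T$ on $\RR$ with $\langle\pi(x\otimes h)\Omega,v\rangle=\langle T,h\rangle$ for $h\in\sch$; hence $g_{v}=T*\tilde f$ with $\tilde f(t):=f(-t)$, and therefore $\supp\hat g_{v}\subseteq\supp\widehat{\tilde f}=-\supp\hat f$.

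If $\supp\hat f\subseteq[\delta,\infty)$ for some $\delta>0$, the two supports are disjoint, so $\hat g_{v}=0$, whence $g_{v}\equiv0$ and in particular $\langle w,v\rangle=g_{v}(0)=0$; density of $V$ then yields $w=0$. In general, when $0$ may belong to $\supp\hat f$, I would approximate: the functions $f_{n}(t):=e^{i2\pi t/n}f(t)$ lie in $\sch$, converge to $f$ in $\sch$, and have $\widehat{f_{n}}(p)=\hat f(p-1/n)$ supported in $[1/n,\infty)$, so $\pi(x\otimes f_{n})\Omega=0$ by the previous case; since $\eta\mapsto\langle\pi(\eta)\Omega,v\rangle$ is continuous on $\sg$, letting $n\to\infty$ gives $\langle\pi(x\otimes f)\Omega,v\rangle=0$ for every $v\in V$, i.e.\ $\pi(x\otimes f)\Omega=0$.

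I expect the only real obstacle to be the behaviour at frequency $0$, which the bare support--clash does not reach; the frequency--shift approximation above is how I would close that gap, and it also lets me avoid appealing to uniqueness of the ground state in this lemma. (If one prefers, one can instead observe that the clash only forces $\supp\hat g_{v}\subseteq\{0\}$, so boundedness makes $g_{v}$ constant and $w$ therefore $U$-invariant, hence a scalar multiple of $\Omega$; the scalar is $\langle w,\Omega\rangle$, and taking $v=\Omega$ the distribution $T$ is translation invariant, hence a multiple of Lebesgue measure, so $\langle w,\Omega\rangle$ is proportional to $\hat f(0)=0$.) A persistent minor subtlety is that smoothness is assumed only for matrix elements between vectors of $V$, so $w$ itself need not lie in $V$; this is why the argument runs through the pairings $\langle\,\cdot\,,v\rangle$ with $v\in V$ and the density of $V$ rather than with $w$ directly.
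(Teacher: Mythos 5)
Your proposal is correct, and although it rests on the same core idea as the paper's proof --- showing that the spectral support of $\pi(x\otimes f)\Omega$ under $U$ is contained in $-\supp\hat f$ and clashing this with the positivity of the energy --- the implementation is genuinely different. The paper works at the vector level: it chooses a compactly supported $\hat g$ equal to $1$ on $\supp\hat f$, expands it in a Fourier series on $[-\frac{S}{2},\frac{S}{2}]$, converts the series term by term into translates $f_{k/S}$ acting on $\Omega$, deduces $g_S(U)\pi(x\otimes f)\Omega=\pi(x\otimes f)\Omega$, and then lets $S\to\infty$; this involves a somewhat delicate double limit (smooth convergence of the Fourier series, then the limit of the operators $g_S(U)$). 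You instead Fourier-transform the single scalar function $a\mapsto\langle U(a)\pi(x\otimes f)\Omega,v\rangle$ for $v\in V$ and read off its support twice, once from the spectral measure of $U$ and once from the convolution identity $g_v=T*\tilde f$ with $T$ the tempered distribution supplied by the smoothness assumption; this reduces the whole argument to two standard facts about tempered distributions, and you correctly route everything through pairings with $v\in V$, since $\pi(x\otimes f)\Omega$ need not itself lie in the common domain. The treatments of the boundary frequency $0$ also differ: the paper approximates $f$ by functions whose Fourier transforms are compactly supported in the open half-line (which implicitly needs $\hat f$ to vanish near $0$), whereas your modulation $f_n(t)=e^{i2\pi t/n}f(t)$ shifts $\supp\widehat{f_n}$ into $[1/n,\infty)$ while converging to $f$ in $\sch$, and therefore also covers the case $0\in\supp\hat f$ --- a case the paper actually needs later when it splits $\xi_1=\xi_{\epsilon+}+\xi_{\epsilon-}$. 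Note that this apparently stronger conclusion does not trivialize Theorem \ref{psizero}: since $\hat f\in\sch$ is continuous and vanishes on $(-\infty,0)$, one has $\hat f(0)=0$ automatically, so $\langle\pi(x\otimes f)\Omega,\Omega\rangle=C_x\hat f(0)=0$ gives no information about $\psi_0$; your parenthetical alternative closing via $U$-invariance of $w$ relies on exactly this observation and is also fine.
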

\begin{proof}
Recall that the Fourier transform is a homeomorphism of the space of Schwartz
class functions $\sch$. This holds also true for $\sg$, since it is just the
space of Schwartz class functions with several components. So we can define
a Fourier transform $\hat{\xi}$ of an element $\xi \in \sg$ as an element
in $\sg$ with the Fourier transformed functions in each component.
To keep the notation simple, let us define $\hat{\pi}$
the Fourier transform of $\pi$, namely  $\hat{\pi}(\hat{\xi}) := \pi(\xi)$.

The action of translation on $\sg$ is as follows: $\xi_a(t) = \xi(t-a)$.
In Fourier transform, it becomes
\[
\widehat{\xi_a}(p) = \int e^{-i2\pi pt}\xi(t-a) dt = e^{-i2\pi pa}\hat{\xi}(p).
\]
We introduce an obvious notation $g\xi(t) := g(t) \xi(t)$ where $g$ is a smooth
function on $\RR$ and $\xi \in \sg$. Then letting $e_a(p) := e^{i2\pi pa}$, we can
write the relation above as $\widehat{g}_a = e_{-a}\hat{g}$.
Let $U$ be the unitary operators implementing translation.
By the invariance of $\Omega$, we can write this as follows.
\[
U(a)\pi(\xi)\Omega = U(a)\pi(\xi)U(a)^*\Omega = \pi(\xi_a)\Omega
= \hat{\pi}(e_{-a}\hat{\xi})\Omega.
\]

Now let $x$ and $f$ be as in the statement and
let $g$ be a function in $\sch$ such that its Fourier transform
has $\hat{g}(p) = 1$ on $\supp(\hat{f})$ and has support in $[-\frac{S}{2},\frac{S}{2}]$,
where $S$ is some positive number.
The restriction of $\hat{g}$ to $[-\frac{S}{2},\frac{S}{2}]$ can be expanded into a Fourier
series
\[
\hat{g}(p) = \sum_{k\in\ZZ} e^{\frac{i2\pi kp}{S}}g_{S,k}.
\]
Recall that the convergence of the Fourier series is smooth
(uniform on $[-\frac{S}{2},\frac{S}{2}]$ for each derivative).
If $p$ is in the interval $[-\frac{S}{2},\frac{S}{2}]$, then it holds that
\[
\hat{f}(p) = \hat{f}(p)\hat{g}(p)
= \hat{f}(p)\left(\sum_{k\in\ZZ} e_{\frac{k}{S}}(p)g_{S,k}\right)
= \sum_{k\in\ZZ}\hat{f}(p)e_{\frac{k}{S}}(p)g_{S,k},
\]
and the convergence in the last series is still smooth on $[-\frac{S}{2},\frac{S}{2}]$,
since $\hat{f}$ is a smooth function with a compact support in this
interval, so the Leibniz rule shows the convergence. Then,
looking at only the left and right hand sides we see that
the equality above holds on the whole real line, simply because
$\hat{f}(p) = 0$ outside the interval $[-\frac{S}{2},\frac{S}{2}]$. The convergence
is still smooth.

Since $\pi$ is an operator valued distribution, so is $\hat{\pi}$,
which is weakly continuous with respect to the smooth topology on $\sg$.
Then we find
\begin{eqnarray*}
\pi(x\otimes f)\Omega &=& \hat{\pi}(x\otimes \hat{f})\Omega \\
&=& \sum_{k\in\ZZ} \hat{\pi}(x\otimes \hat{f}e_{\frac{k}{S}}g_{S,k})\Omega.
\end{eqnarray*}

On the other hand, as a function on the whole real line $\RR$,
the series
\[
\sum_{k\in\ZZ} e^{\frac{i2\pi kp}{S}} g_{S,k}
\]
is uniformly convergent, since it is uniformly convergent on
an interval $[-\frac{S}{2},\frac{S}{2}]$ because it is the Fourier expansion of $\hat{g}$, and
uniformly convergent also on any translation of the interval $[-\frac{S}{2},\frac{S}{2}]$ since the series is
obviously a function with a period $S$.
It holds that
\[
\sum_{k\in\ZZ} e^{\frac{-i2\pi kp}{S}} g_{S,k}
= \sum_{k\in\ZZ} e^{\frac{i2\pi kp}{S}} g_{S,-k},
\]
and it is also uniformly convergent.
Let $g_S$ be the function
which has the series above as the Fourier transform.
Then the series of operators $\sum_{k\in\ZZ} U\left(\frac{k}{S}\right)g_{S,-k} = g_S(U)$
is strongly convergent. Applying this equality to the vector $\pi(x\otimes f)\Omega$
we have 
\begin{eqnarray*}
g_S(U)\pi(x\otimes f)\Omega &=&
\sum_{k\in\ZZ} U\left(\frac{k}{S}\right)g_{S,-k} \pi(x\otimes f)\Omega \\
&=& \sum_{k\in\ZZ} g_{S,-k}U\left(\frac{k}{S}\right)
\pi(x\otimes f)U\left(-\frac{k}{S}\right)\Omega \\
&=& \sum_{k\in\ZZ} g_{S,-k}\pi\left(x\otimes f_{\frac{k}{S}}\right)\Omega,
\end{eqnarray*}
since $\Omega$ is invariant under translation and $U$ implements it.
Then by replacing $k$ by $-k$ we can write it as follows.
\begin{eqnarray*}
g_S(U)\pi(x\otimes f)\Omega
&=& \sum_{k\in\ZZ} g_{S,k}\pi\left(x\otimes f_{-\frac{k}{S}}\right)\Omega \\
&=& \sum_{k\in\ZZ} g_{S,k}\hat{\pi}\left(x\otimes e_{\frac{k}{S}}\hat{f}\right)\Omega \\
&=& \hat{\pi}(x\otimes \hat{g}\hat{f})\Omega \\
&=& \hat{\pi}(x\otimes \hat{f})\Omega \\
&=& \pi(x\otimes f)\Omega.
\end{eqnarray*}

If we let $S$ tend to $\infty$, $g_S(U)$ tends to an operator $\tilde{g}(U)$,
where $\tilde{g}$ has the Fourier transform $\hat{g}(-p)$.
Now recall that the condition on $g$ is that its Fourier transform $\hat{g}$
has compact support and is equal to $1$ on the support of $\hat{f}$.
Then $\hat{\tilde{g}}$ is equal to $1$ on $-\supp(\hat{f})$ and for such $\tilde{g}$
it holds $\tilde{g}(U)\pi(x\otimes f)\Omega = \pi(x\otimes f)$. Then the support of
spectral measure of the vector $\pi(x\otimes f)\Omega$ with respect to $U$ must
be contained in $-\supp(\hat{f})$.

In particular, if $\supp(\hat{f})$ is compactly supported in $\RR_+$, then
the spectral measure of $\pi(x\otimes f)\Omega$ is compactly supported in $\RR_-$,
hence it is equal to $0$ because of the positivity of the energy.
Any function with support in $\RR_+$ is smoothly approximated by a function
compactly supported in $\RR_+$, so the continuity of $\pi$ as an operator
valued distribution completes the lemma.
\end{proof}

Let us define $\psi(\xi) := \<\pi(\xi)\Omega,\Omega\>$. By definition $\Omega$ is
unique for ground state representations, hence $\psi$ is an invariant for
this class of representations.
\begin{lemma}\label{support}
$\psi(\xi)$ depends only on $\hat{\xi}(0) \in \fgc$.
\end{lemma}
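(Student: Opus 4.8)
The plan is to recognise the one-point function $\psi(\xi)=\langle\pi(\xi)\Omega,\Omega\rangle$ as a translation-invariant tempered distribution on $\sg=\fgc\otimes\sch$ and to exploit the fact that a translation-invariant tempered distribution on $\RR$ is a constant multiple of $\int\,\cdot\,dt$. First I would check translation invariance of $\psi$: since $U(a)\Omega=\Omega$ and $U(a)\pi(\xi)U(a)^{*}=\pi(\xi_{a})$,
\[
\psi(\xi_{a})=\langle\pi(\xi_{a})\Omega,\Omega\rangle=\langle U(a)\pi(\xi)U(a)^{*}\Omega,\Omega\rangle=\langle\pi(\xi)\Omega,\Omega\rangle=\psi(\xi).
\]
Since $\Omega\in V$, the smoothness hypothesis makes $\xi\mapsto\psi(\xi)$ tempered, i.e.\ continuous for the Schwartz topology, so $\psi$ is a genuine tempered distribution and not merely an algebraic functional.

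Next I would reduce to the scalar case. For fixed $x\in\fgc$ the map $f\mapsto\psi(x\otimes f)$ is a translation-invariant tempered distribution on $\sch(\RR)$; such a distribution has vanishing derivative, hence equals a constant, so there is $c(x)\in\CC$ with $\psi(x\otimes f)=c(x)\int_{\RR}f(t)\,dt=c(x)\,\hat f(0)$. By linearity of $\psi$ and bilinearity of $(x,f)\mapsto x\otimes f$, the map $x\mapsto c(x)$ is linear, so for a general $\xi=\sum_{i}x_{i}\otimes f_{i}$ one gets $\psi(\xi)=\sum_{i}c(x_{i})\widehat{f_{i}}(0)=c\big(\hat\xi(0)\big)$ with $c\in\fgc^{*}$ and $\hat\xi(0)=\int_{\RR}\xi(t)\,dt$, which is exactly the assertion. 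As a variant one can instead first combine Lemma~\ref{spectralmeasure} with the $*$-operation (note $\widehat{\bar f}(p)=\overline{\hat f(-p)}$, so $\hat f$ supported in $\RR_{-}$ forces $\widehat{\bar f}$ supported in $\RR_{+}$) to see that $\psi(\xi)=0$ whenever $0\notin\supp\hat\xi$; then $\psi$ is supported at the origin in Fourier space, hence a polynomial in $t$, and translation invariance eliminates all terms of positive degree. The direct argument is shorter and does not even need Lemma~\ref{spectralmeasure}.

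I do not expect a serious obstacle here: the content of the lemma is entirely carried by the translation invariance of $\psi$, which is nothing but the invariance of $\Omega$. The only points demanding attention are routine distribution-theoretic ones — making sure the smoothness hypothesis really delivers temperedness of the single functional $\psi$, so that ``translation-invariant $\Rightarrow$ constant'' is applied legitimately in $\mathscr{S}'(\RR)$, and checking that the decomposition along $\sg=\fgc\otimes\sch$ is compatible both with the translation action and with linearity, so that the constants $c(x)$ assemble into a well-defined element of $\fgc^{*}$.
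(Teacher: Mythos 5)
Your proposal is correct and follows essentially the same route as the paper: the paper also fixes $x\in\fgc$, observes that $f\mapsto\psi(x\otimes f)$ is a translation-invariant (tempered) functional on $\sch(\RR)$, concludes it must be of the form $C_x\hat f(0)$, and reassembles $\psi$ by linearity. Your write-up merely makes explicit the two points the paper leaves implicit (temperedness of $\psi$ from the smoothness hypothesis, and the standard fact that a translation-invariant tempered distribution is a multiple of Lebesgue measure), so there is nothing further to add.
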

\begin{proof}
We fix $x \in \fgc$ and consider the restriction
\begin{align*}
\psi_x:\sch &\longrightarrow \CC \\
       f    &\longmapsto  \psi(x\otimes f).
\end{align*}
It is obvious that $\psi_x$ is invariant under translation.
Hence it has the form $\psi_x(f) = C_x\hat{f}(0)$, where $C_x$ is a constant
depending on $x$.
The linear functional $\psi$ can be reconstructed by such
restrictions, hence $\psi$ itself depends only on $\hat{\xi}(0)$.
\end{proof}

\begin{lemma}\label{scalar}
Let $\{\xi_n\}$ be a sequence of elements in $\sg$ such that
\begin{itemize}
\item each $\hat{\xi}_n$ has a compact support.
\item for $p \ge 0$, $\hat{\xi}_n(p) = \hat{\xi}_m(p)$ for any $n, m \in \NN$.
\item for $p < 0$, the norm of $\hat{\xi}_n(p) \in \fgc$ with respect to the Killing form
is uniformly bounded and the Lebesgue measure
of $\supp(\hat{\xi}_n)\cap \RR_-$ tends to $0$.
\end{itemize}
Then $\pi(\xi_n)\Omega$ is convergent to $\<\pi(\xi_n)\Omega,\Omega\>\Omega$.
\end{lemma}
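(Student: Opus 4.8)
The plan is to show, in turn, that $c:=\langle\pi(\xi_n)\Omega,\Omega\rangle$ is independent of $n$, that $\{\pi(\xi_n)\Omega\}_n$ is a Cauchy sequence, and that its limit is the ($U$-invariant) vector $c\,\Omega$. The first point is immediate: since $0\in\RR_+$, the second hypothesis gives $\hat\xi_n(0)=\hat\xi_m(0)$, and by Lemma~\ref{support} the number $\langle\pi(\xi_n)\Omega,\Omega\rangle=\psi(\xi_n)$ depends only on $\hat\xi_n(0)$, hence is constant.

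For the Cauchy property I would fix $n,m$ and set $\zeta:=\xi_n-\xi_m$. Since $\hat\zeta$ vanishes on $\RR_+$ it is, being smooth, flat at $0$ and supported in $(-\infty,0]$, so $\widehat{\zeta^*}(p)=(\hat\zeta(-p))^*$ is supported in $[0,\infty)$; approximating $\widehat{\zeta^*}$ in the Schwartz topology by its translates $\widehat{\zeta^*}(\,\cdot-\delta)$, which are compactly supported inside $\RR_+$, Lemma~\ref{spectralmeasure} and the continuity of $\pi$ give $\pi(\zeta^*)\Omega=0$. Using the commutation relation and $\langle\pi(\zeta)v_1,v_2\rangle=\langle v_1,\pi(\zeta^*)v_2\rangle$,
\[
\|\pi(\zeta)\Omega\|^2=\langle\Omega,\pi(\zeta^*)\pi(\zeta)\Omega\rangle=\langle\Omega,\pi(\zeta)\pi(\zeta^*)\Omega\rangle+\langle\Omega,\pi([\zeta^*,\zeta])\Omega\rangle-\omega(\zeta^*,\zeta)=\langle\Omega,\pi([\zeta^*,\zeta])\Omega\rangle-\omega(\zeta^*,\zeta).
\]
By the convolution theorem $\widehat{[\zeta^*,\zeta]}(0)=\int_\RR[(\hat\zeta(s))^*,\hat\zeta(s)]\,ds$, while $\omega(\zeta^*,\zeta)$ (a multiple of $\omega_1(\zeta^*,\zeta)$) is a constant times $\int_\RR s\,\langle(\hat\zeta(s))^*,\hat\zeta(s)\rangle\,ds$; both integrands are carried by $\supp\hat\zeta\subseteq(\supp\hat\xi_n\cup\supp\hat\xi_m)\cap\RR_-$, a set of measure tending to $0$ on which $\hat\zeta$ stays bounded, so (using Lemma~\ref{support} for the first term) both quantities tend to $0$ as $n,m\to\infty$. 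Hence $\pi(\xi_n)\Omega$ converges to some $\Psi$ with $\langle\Psi,\Omega\rangle=\lim\psi(\xi_n)=c$.

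It then remains to see $\Psi\in\CC\Omega$. Fix $a>0$, pick $\chi\in C^\infty(\RR)$ with $\chi\equiv1$ on $(-\infty,-2a]$ and $\chi\equiv0$ on $[-a,\infty)$, and split $\xi_n=\sigma_n+\tau_n$ where $\widehat{\sigma_n}=(1-\chi)\hat\xi_n$, $\widehat{\tau_n}=\chi\hat\xi_n$. Then $\supp\widehat{\tau_n}$ is a compact subset of $\RR_-$ bounded away from $0$, of measure tending to $0$, on which $\hat\xi_n$ is bounded; thus $\widehat{\tau_n^*}$ is compactly supported inside $\RR_+$, $\pi(\tau_n^*)\Omega=0$ by Lemma~\ref{spectralmeasure}, and the computation of the previous paragraph applied to $\tau_n$ gives $\|\pi(\tau_n)\Omega\|^2\to0$. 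On the other hand $\widehat{\sigma_n}$ is supported in $[-2a,\infty)$, so by the proof of Lemma~\ref{spectralmeasure} the spectral measure of $\pi(\sigma_n)\Omega$ relative to $U$ is carried by $-\supp\widehat{\sigma_n}$, which together with positivity of energy forces $\pi(\sigma_n)\Omega\in E([0,2a])\H$. Since $\pi(\sigma_n)\Omega=\pi(\xi_n)\Omega-\pi(\tau_n)\Omega\to\Psi$ and $E([0,2a])\H$ is closed, $\Psi\in E([0,2a])\H$; letting $a\downarrow0$ gives $\Psi\in\bigcap_{a>0}E([0,2a])\H=E(\{0\})\H=\CC\Omega$, the last equality because $\Omega$ is the only $U$-invariant vector up to scalar. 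With $\langle\Psi,\Omega\rangle=c$ and $\|\Omega\|=1$ this yields $\Psi=c\,\Omega=\langle\pi(\xi_n)\Omega,\Omega\rangle\Omega$.

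I expect the last step to be the crux: the negative-frequency part of $\xi_n$ applied to $\Omega$ creates vectors whose $U$-spectrum lies in $-(\supp\hat\xi_n\cap\RR_-)$, a subset of $\RR_+$ that need \emph{not} shrink to $\{0\}$, so one cannot read off directly that $\Psi$ has energy zero; the point is instead that the part of this term with energy at least $a$ has vanishing norm, while the remainder has energy in $[0,2a]$. I should also flag that all the norm estimates above (both in the Cauchy step and in the last step) require $\supp\hat\xi_n$ to lie in a fixed bounded interval, and that some such hypothesis is genuinely needed: if $\hat\xi_n$ carried a bump of fixed height and width $1/n$ at frequency $-n$, then the cocycle term in $\|\pi(\xi_n-\xi_m)\Omega\|^2$ would not vanish and the conclusion would fail at nonzero level.
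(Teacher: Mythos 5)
Your argument is correct, and its core estimate is exactly the paper's: you write $\|\pi(\xi_n-\xi_m)\Omega\|^2$ as $\langle\Omega,\pi([\zeta^*,\zeta])\Omega\rangle-\omega(\zeta^*,\zeta)$ after killing $\pi(\zeta^*)\Omega$ via Lemma~\ref{spectralmeasure}, and control both terms by the uniform bound and the shrinking support, invoking Lemma~\ref{support} for the first. Where you genuinely depart from the paper is the identification of the limit: the paper simply notes that $\pi(\xi_n)\Omega\in\chi_{-\supp(\hat\xi_n)}(U)\H$ and asserts that ``the intersection of these spaces is clearly $\CC\Omega$'', which is terse --- these spectral subspaces are not nested, the limit of the sequence need not lie in each of them, and $\bigcap_n\bigl(-\supp(\hat\xi_n)\bigr)\cap\RR_+$ could a priori contain points other than $0$. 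Your cutoff $\xi_n=\sigma_n+\tau_n$ at frequency $-a$, showing $\|\pi(\tau_n)\Omega\|\to0$ by the same norm computation while $\pi(\sigma_n)\Omega\in E([0,2a])\H$, and then letting $a\downarrow0$, supplies a complete proof of precisely the step the paper elides; this is the main thing your version buys. Your closing caveat is also well taken: both your estimate and the paper's need $\int_{\supp\hat\zeta}|p|\,dp$-type control for the cocycle term, which the stated hypotheses (compact but possibly escaping supports) do not literally guarantee; the lemma should be read with the supports confined to a fixed bounded set, as they are in its only application in Lemma~\ref{characterization}, where $\hat\xi_{\epsilon-}$ is supported in the fixed compact set $\supp\hat\xi_1$.
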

\begin{proof}
By the proof of Lemma \ref{spectralmeasure}, $\pi(\xi_n)\Omega$ is contained in
$\chi_{-\supp(\hat{\xi}_n)}(U)\H$. The intersection of these spaces is clearly
the one-dimensional space $\CC\Omega$.
To see the convergence, we have to estimate the following.
\begin{eqnarray*}
\|\pi(\xi_m-\xi_n)\Omega\|^2 &=& \<\pi(\xi_m-\xi_n)^*\pi(\xi_m-\xi_n)\Omega,\Omega\> \\
&=& \<\pi(\xi_m-\xi_n)\pi(\xi_m-\xi_n)^*\Omega,\Omega\> + \<[\pi(\xi_m-\xi_n)^*,\pi(\xi_m-\xi_n)]\Omega,\Omega\>.
\end{eqnarray*}
The first term vanishes by Lemma \ref{spectralmeasure}.

We can transform the second term
using the commutation relation and obtain
\[
\<\pi([(\xi_m-\xi_n)^*,\xi_m-\xi_n])\Omega,\Omega\> - \omega((\xi_m-\xi_n)^*,\xi_m-\xi_n).
\]
Let us estimate the first term of this difference.
By Lemma \ref{support}, it is enough to estimate the value at $0$ of the Fourier transform
of $[(\xi_m-\xi_n)^*,\xi_m-\xi_n]$.
By the assumption, the Fourier transform of $\xi_m-\xi_n$ is also bounded and the measure 
of its support tends to $0$ as $m,n$ tend to $\infty$. In general we have
\[
\widehat{[\eta^*,\eta]}(0) = \int [\widehat{\eta^*}(p),\widehat{\eta}(-p)]dp.
\]
If we apply this to $\eta = \xi_m-\xi_n$, the integral is bounded by
(the square of the double of) the uniform bound of $\{\hat{\xi}_m\}$, the norm of the commutator of $\fgc$
and the measure of the support of $\hat{\xi}_m-\hat{\xi}_n$. Then this tends to $0$.
By continuity of $\pi$, this term tends to $0$.
For the second term, we know the concrete form of the cocycle $\omega$ and
in the Fourier transform it takes
\[
\omega(\eta^*,\eta) = \frac{c}{2\pi i}\int i2\pi p\<\widehat{\eta^*}(-p),\widehat{\eta}(p)\>dt,
= c\int p\<\widehat{\eta^*}(-p),\widehat{\eta}(p)\>dt
\]
then by a similar reasoning the corresponding term converges to $0$.

Now that we know that the concerned sequence converges to a scalar multiple of $\Omega$,
it is enough to determine the coefficient $\<\pi(\xi_n)\Omega,\Omega\>$.
By Lemma \ref{support} this is determined by $\hat{\xi}(0)$ and by the assumption
this is constant.
\end{proof}

\begin{lemma}\label{commutator}
Let $\{\xi_n\}$ be a sequence of elements in $\sg$.
Assume that components of $\hat{\xi}_n$ are uniformly bounded and convergent
to a bounded function and
the Lebesgue measure of the support of $\hat{\xi}_{n'}-\hat{\xi}_n$ is monotonically decreasing
to $0$. Then for any $\eta \in \sg$, the commutator $[\xi_n,\eta]$ is smoothly
convergent to an element in $\sg$.
\end{lemma}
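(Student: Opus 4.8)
The plan is to prove the (apparently stronger) statement that $\{[\xi_n,\eta]\}_n$ converges in the topology of $\sg$, by verifying that it is Cauchy for each of the Schwartz seminorms
\[
\|\zeta\|_{N,k}:=\sup_{t\in\RR}(1+t^2)^N\,\|\partial_t^k\zeta(t)\|,
\]
where $\|\cdot\|$ is a fixed norm on the finite-dimensional space $\fgc$. Since $\sg$ (being $\fgc\otimes\sch$, hence a finite direct sum of copies of $\sch$) is a Fréchet space, completeness then delivers a limit lying in $\sg$, which is exactly what is asserted. Throughout, write $\zeta_{n,m}:=\xi_n-\xi_m\in\sg$, so that $\widehat{\zeta_{n,m}}=\hat\xi_n-\hat\xi_m$ is supported on $B_{n,m}:=\supp(\hat\xi_n-\hat\xi_m)$, with $|B_{n,m}|\to0$ by hypothesis and with $\|\widehat{\zeta_{n,m}}(s)\|\le 2M$ on $B_{n,m}$, $M$ being a uniform bound for the components of all $\hat\xi_n$.

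The first step is to use the Leibniz rule and the fact that $\eta$ is \emph{fixed} to reduce the Cauchy estimate to a sup-norm bound on the derivatives of $\zeta_{n,m}$ alone. Since $[\zeta_{n,m},\eta](t)=[\zeta_{n,m}(t),\eta(t)]$, one has
\[
\partial_t^k[\zeta_{n,m},\eta]=\sum_{j=0}^k\binom{k}{j}\bigl[\partial_t^j\zeta_{n,m},\,\partial_t^{k-j}\eta\bigr],
\]
and, the bracket of $\fgc$ being a bounded bilinear map, there is a constant $C$ with
\[
\|[\xi_n,\eta]-[\xi_m,\eta]\|_{N,k}\ \le\ C\sum_{j=0}^k\binom{k}{j}\Bigl(\sup_{t}\|\partial_t^j\zeta_{n,m}(t)\|\Bigr)\,\bigl\|\,(1+(\cdot)^2)^N\partial^{k-j}\eta\,\bigr\|_\infty .
\]
As $\eta\in\sg$ is fixed, the factors $\|(1+(\cdot)^2)^N\partial^{k-j}\eta\|_\infty$ are finite constants independent of $n,m$, so it suffices to show $\sup_t\|\partial_t^j\zeta_{n,m}(t)\|\to0$ for each $j\ge0$.

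The second step establishes this by Fourier inversion over the small set $B_{n,m}$:
\[
\partial_t^j\zeta_{n,m}(t)=\int_{B_{n,m}}(2\pi i s)^j e^{2\pi i s t}\,\widehat{\zeta_{n,m}}(s)\,ds,
\qquad
\sup_t\|\partial_t^j\zeta_{n,m}(t)\|\ \le\ (2\pi)^j\int_{B_{n,m}}|s|^j\,\|\widehat{\zeta_{n,m}}(s)\|\,ds,
\]
and the last integral is at most $(2\pi)^j\,2M\,\bigl(\sup_{s\in B_{n,m}}|s|^j\bigr)\,|B_{n,m}|$. Since the frequency sets $B_{n,m}$ remain confined to a fixed bounded window (the $\hat\xi_n$ differing from one another only on a bounded region of decreasing measure), $\sup_{s\in B_{n,m}}|s|^j$ is bounded uniformly in $n,m$, whence $\sup_t\|\partial_t^j\zeta_{n,m}\|=O(|B_{n,m}|)\to0$. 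Substituting into the displayed seminorm estimate shows $\{[\xi_n,\eta]\}$ is Cauchy in every seminorm of $\sg$, and completeness finishes the proof; if one only needs convergence uniformly on compact sets for each derivative, the same computation applies with the weight $(1+t^2)^N$ suppressed.

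The step I expect to be the main obstacle is precisely the last estimate. The hypotheses supply only an $L^\infty$-control of the $\hat\xi_n$, which gives no decay at large frequency, so the polynomial weight $|s|^j$ produced by differentiation in the Fourier-inversion formula has to be absorbed. What makes this work is that the differences $\hat\xi_n-\hat\xi_m$ are supported on a fixed bounded frequency window whose Lebesgue measure collapses, so that the contribution of that window to any Schwartz seminorm of $[\xi_n,\eta]$ is bounded by a fixed constant (depending on $\eta$, on the window, and on $j,N,k$) times $|B_{n,m}|$. Everything else — completeness of $\sg$, the Leibniz expansion, and the boundedness of the Lie bracket on the finite-dimensional $\fgc$ (which, via $\sg=\fgc\otimes\sch$ and $[x\otimes f,y\otimes g]=[x,y]\otimes fg$, also makes all constants uniform once one fixes a basis of $\fgc$) — is routine.
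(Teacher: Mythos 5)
Your proof is correct and follows essentially the same route as the paper's: both arguments control $[\xi_n-\xi_m,\eta]$ by (uniform bound on the $\hat{\xi}_n$) $\times$ (measure of $\supp(\hat{\xi}_n-\hat{\xi}_m)$), letting the fixed Schwartz function $\eta$ absorb the polynomial weights --- you carry this out on the physical side via Leibniz and Fourier inversion, while the paper works on the Fourier side via the convolution formula $\widehat{[\xi_n,\eta]}=[x,y]\otimes(\hat{f}_n*\hat{g})$. Your appeal to the sets $B_{n,m}$ being confined to a fixed bounded frequency window is not literally among the stated hypotheses, but the paper's own estimate quietly needs the same strengthening (to bound $p^l\hat{g}^{(m)}(p-s)$ uniformly for $s\in B_{n,m}$), and it does hold in the one place the lemma is invoked, where all the $\hat{\xi}_{\epsilon+}$ are supported in a fixed compact set.
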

\begin{proof}
It is enough to consider the case where $\xi_n = x\otimes f_n, \eta = y\otimes g$,
since the general case is a finite linear combination of such elements and the
convergence of the commutator follows immediately.
In this case, the commutator is expressed with the Fourier transform as follows.
\[
\widehat{[\xi_n,\eta]}(p) = [x,y]\otimes \int \hat{f}_n(s)\hat{g}(p-s)ds.
\]
The convergence in the smooth topology is defined as the uniform convergence
of the following functions.
\[
p^l\widehat{[\xi_n,\eta]}^{(m)}(p) = [x,y]\otimes \int \hat{f}_n(s)p^l\hat{g}^{(m)}(p-s)ds.
\]
Since the function in the integrand is uniformly bounded, and the measure of the support
$\hat{\xi}_{n'}-\hat{\xi}_n$ is decreasing to $0$, the integral is convergent uniformly.
\end{proof}

\begin{lemma}\label{characterization}
The representation $\pi$ is characterized by $\psi$ and the level $c$. Namely,
any two representations which correspond to the same functional $\psi$ and the 
same level $c$ are unitarily equivalent.
\end{lemma}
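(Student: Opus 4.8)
The plan is to reconstruct the GNS-type data of $\pi$ directly from $\psi$ and $c$, using the cyclicity of $\Omega$ together with the ``positive-spectrum'' constraint from Lemma \ref{spectralmeasure}. Since $V$ is algebraically generated by $\Omega$, every vector of $V$ is of the form $\pi(\xi_1)\cdots\pi(\xi_N)\Omega$ for $\xi_j \in \sg$, so a unitary intertwiner between two ground state representations $(\pi,\H,\Omega)$ and $(\pi',\H',\Omega')$ is forced to be $\pi(\xi_1)\cdots\pi(\xi_N)\Omega \mapsto \pi'(\xi_1)\cdots\pi'(\xi_N)\Omega'$. To see that this assignment is well defined and isometric, it suffices to check that all inner products $\langle \pi(\xi_1)\cdots\pi(\xi_N)\Omega, \pi(\eta_1)\cdots\pi(\eta_M)\Omega\rangle$ are determined by $\psi$ and $c$ alone; then the map extends by linearity to a dense subspace and by continuity to a unitary $\H \to \H'$, intertwining the representations.

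\textbf{Reducing inner products to one-point functions.} The key step is a normal-ordering computation: using the commutation relation $\pi([\xi,\eta]) = \pi(\xi)\pi(\eta) - \pi(\eta)\pi(\xi) + \omega(\xi,\eta)$ and the $*$-relation, any expression $\langle \pi(\xi_1)\cdots\pi(\xi_N)\Omega, \pi(\eta_1)\cdots\pi(\eta_M)\Omega\rangle = \langle \pi(\xi_N^*)\cdots\pi(\xi_1^*)\pi(\eta_1)\cdots\pi(\eta_M)\Omega,\Omega\rangle$ is a polynomial in the operators $\pi(\zeta)$ applied to $\Omega$ and paired with $\Omega$. I would move annihilation-type factors to the right: concretely, I first approximate each $\zeta = \xi_j^*$ or $\eta_k$ by a sequence whose Fourier transform has support of nearly-full measure in $\RR_+$ (plus a small-measure remainder), as in Lemma \ref{scalar} and Lemma \ref{commutator}; by Lemma \ref{spectralmeasure} the $\RR_+$-supported part kills $\Omega$ from the right, and the commutators with it converge smoothly (Lemma \ref{commutator}) while the cocycle contributions and the residual one-point terms converge to explicit quantities (Lemma \ref{scalar}). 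Iterating, every multi-point function collapses into a sum of products of expressions of the form $\psi([\cdots[\zeta_{i_1},\zeta_{i_2}],\cdots])$ and values of the cocycle $c\,\omega_1$; since $\psi$ is fixed and $c$ is fixed, the whole inner product is determined.

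\textbf{Assembling the intertwiner.} Having shown that all inner products among generating vectors agree for $\pi$ and $\pi'$, define $W_0$ on the algebraic span of $\{\pi(\xi_1)\cdots\pi(\xi_N)\Omega\}$ by the assignment above. Equality of inner products shows that $W_0$ is well defined (the kernel of the map into $\H$ coincides with that into $\H'$) and isometric, hence extends to a unitary $W: \H \to \H'$ with $W\Omega = \Omega'$. By construction $W\pi(\xi) = \pi'(\xi)W$ on $V$, so $W$ is a unitary equivalence of projective representations; the common cocycle $c\,\omega_1$ is respected since it already entered the inner-product computation. Finally, $W$ intertwines the translation groups because $U(a)\pi(\xi)\Omega = \pi(\xi_a)\Omega$ on generators, an identity that transfers verbatim through $W$.

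\textbf{Main obstacle.} The delicate point is the normal-ordering step in a setting of \emph{unbounded} operators on a merely dense common domain $V$: one must ensure that all the intermediate vectors obtained by commuting factors past each other stay in $V$ (so that further operators may be applied) and that the various limits — the smooth approximation of Fourier supports, the strong limits of the spectral-calculus operators $g_S(U)$, and the distributional continuity of $\pi$ — may be interchanged with the algebraic manipulations. The smoothness hypothesis on $\pi$ and the Schwartz-class framework are exactly what make these interchanges legitimate, but verifying them carefully for an $N$-fold product, uniformly in the approximating sequences, is the technical heart of the argument.
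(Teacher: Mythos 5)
Your proposal follows essentially the same route as the paper: reduce everything to showing that the $n$-point functions $\langle\pi(\xi_1)\cdots\pi(\xi_n)\Omega,\Omega\rangle$ are determined by $\psi$ and $c$, by splitting a factor into a positive-frequency part (which annihilates $\Omega$ by Lemma \ref{spectralmeasure} after being commuted to the right, with the commutators controlled by Lemma \ref{commutator}) and a remainder handled by Lemma \ref{scalar} via smooth $\epsilon$-cutoffs, and then assemble the unitary intertwiner from the agreement of all inner products. The paper organizes this as an induction on $n$ that peels off only the leftmost factor through the decomposition $\xi_1=\xi_{\epsilon+}+\xi_{\epsilon-}$, which differs from your ``treat each factor'' phrasing only in bookkeeping.
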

\begin{proof}
We will show that the $n$-point function
$\<\pi(\xi_1)\pi(\xi_2)\cdots\pi(\xi_n)\Omega,\Omega\>$
is determined by $\psi$ and $c$ for any $n$.
Since $\Omega$ is cyclic for $\pi$, this implies that
any inner product of the form
$\<\pi(\xi_1)\cdots\pi(\xi_n)\Omega,\pi(\eta_1)\cdots\pi(\eta_m)\Omega\>$ is
determined by $\psi$ and $c$. If two representations $\pi_1, \pi_2$ have the
same $\psi$ and $c$, then the map
\[
\pi_1(\xi_1)\cdots\pi_1(\xi_n)\Omega \longmapsto \pi_2(\xi_1)\cdots\pi_2(\xi_n)\Omega
\]
is a unitary map intertwining the two representations
since by the definition of ground state representation these vectors span
the dense common domain $V$. Furthermore,
by the continuity of $\pi$, we may assume that $\{\hat{\xi}_k\}$ have
compact supports.

We show that for $n \ge 2$, the $n$-point function is reduced to
$(n-1)$-point functions. Then an induction about $n$ completes the proof.
Let us decompose $\xi_1$ into two parts $\xi_1 = \xi_+ + \xi_-$ such
that $\hat{\xi}_+$ has support in $\RR_+$.
By the Lemma \ref{spectralmeasure} we know that
$\pi(\xi_+)\Omega = 0$. In the $n$-point function,
we can take $\pi(\xi_+)$ to the right using the commutation relation and annihilate
it letting it act on $\Omega$, so that the $n$-point function will be
reduced to the sum of $(n-1)$-point functions and $\xi_-$ part. Explicitly,

\begin{eqnarray*}
\<\pi(\xi_1)\pi(\xi_2)\cdots\pi(\xi_n)\Omega,\Omega\>
&=& \<\pi(\xi_+ + \xi_-)\pi(\xi_2)\cdots\pi(\xi_n)\Omega,\Omega\> \\
&=& \<\pi(\xi_-)\pi(\xi_2)\cdots\pi(\xi_n)\Omega,\Omega\> \\
& & + \<[\pi(\xi_+),\pi(\xi_2)]\cdots\pi(\xi_n)\Omega,\Omega\> \\
& & + \<\pi(\xi_2)\pi(\xi_+)\cdots\pi(\xi_n)\Omega,\Omega\> \\
&=& \<\pi(\xi_-)\pi(\xi_2)\cdots\pi(\xi_n)\Omega,\Omega\> \\
& & + \sum_k \<\pi(\xi_2)\cdots[\pi(\xi_+),\pi(\xi_k)]\cdots\pi(\xi_n)\Omega,\Omega\>. \\
\end{eqnarray*}

This is equal to the following since $\pi$ is a projective representation.
\begin{eqnarray*}
\<\pi(\xi_1)\pi(\xi_2)\cdots\pi(\xi_n)\Omega,\Omega\>
&=& \<\pi(\xi_-)\pi(\xi_2)\cdots\pi(\xi_n)\Omega,\Omega\> \\
& & + \sum_k \<\pi(\xi_2)\cdots(\pi([\xi_+,\xi_k])-\omega(\xi_+,\xi_k))\cdots\pi(\xi_n)\Omega,\Omega\>. \\
\end{eqnarray*}

Now, let $f_\epsilon$ be a smooth function such that
$f_\epsilon(p) = 1$ for $p \ge \epsilon$, $f_\epsilon(p) = 0$ for $p \le 0$ and
$0 < |f_\epsilon| < 1$ for $0 < p < \epsilon$.
Let us make a decomposition of $\xi_1$ such that
$\hat{\xi}_{\epsilon+}(p) = f_\epsilon(p)\hat{\xi}(p)$ and
$\xi_{\epsilon-} = \xi_1-\xi_{\epsilon+}$.

On the one hand, $\xi_{\epsilon+}$
satisfies the assumption of Lemma \ref{commutator}, hence by letting
$\epsilon$ tend to $0$, all the brackets above are convergent
to images of some elements in $\sg$, hence there appear images by $\pi$ and
scalar multiples of $c$ which depends only on the Lie algebra structure.
On the other hand, $\xi_{\epsilon-}^*$ satisfies
the assumption of Lemma \ref{scalar} and
$\pi(\xi_{\epsilon-}^*)\Omega = \pi(\xi_{\epsilon-})^*\Omega$ is convergent
to $\psi(\xi_{\epsilon-}^*)\Omega$ (which does not depend on $\epsilon$).
This reduces every term in the $n$-point function to $(n-1)$-point functions,
$\psi$ and $c$.
\end{proof}

We have seen that $\psi$ and $c$ characterize the representation
$\pi$. Finally we show that $\psi$ is not necessary and
$\pi$ is determined only by $c$.
\begin{theorem}\label{psizero}
For any ground state representation, $\psi = 0$, thus $\<\pi(\xi)\Omega,\Omega\> = 0$ for
any $\xi \in \sch\fgc$.
\end{theorem}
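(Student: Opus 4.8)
The plan is to first turn $\psi$ into a linear functional on $\fgc$ and then annihilate it by combining positivity of norms with the explicit form of the level-$c$ cocycle.

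By Lemma \ref{support} there is a linear functional $\varphi\colon\fgc\to\CC$ (namely $\varphi(x):=C_x$ in the notation of that proof) with $\psi(\xi)=\varphi(\hat\xi(0))$ for every $\xi\in\sg$, so it suffices to prove $\varphi=0$. Fix $x_0\in\fgc$ and a nonzero smooth real $\rho\ge 0$ with $\supp\rho\subset(0,\delta)$, and pick $\xi\in\sg$ with $\hat\xi(p)=\rho(p)x_0$. Since $\hat\xi$ is supported in $\RR_+$, Lemma \ref{spectralmeasure} gives $\pi(\xi)\Omega=0$, while $\hat\xi(0)=0$. The key observation is that $\widehat{[\xi,\xi^*]}(0)=\int[\hat\xi(s),\hat\xi(s)^*]\,ds=\big(\int\rho(p)^2\,dp\big)[x_0,x_0^*]$ need \emph{not} vanish, even though $\hat\xi$ avoids $0$, because it is an integral. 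Using the commutation relation $\pi(\xi)\pi(\xi^*)=\pi(\xi^*)\pi(\xi)+\pi([\xi,\xi^*])-\omega(\xi,\xi^*)$, the identity $\|\pi(\xi^*)\Omega\|^2=\langle\pi(\xi)\pi(\xi^*)\Omega,\Omega\rangle$, the vanishing $\pi(\xi^*)\pi(\xi)\Omega=0$, Lemma \ref{support} for the middle term, and the explicit cocycle $\omega(\xi,\xi^*)=-c\langle x_0,x_0^*\rangle\int p\,\rho(p)^2\,dp$, one obtains (normalizing $\|\Omega\|=1$)
\[
0\le\|\pi(\xi^*)\Omega\|^2=-\Big(\int\rho(p)^2\,dp\Big)\,\varphi([x_0^*,x_0])+c\,\langle x_0,x_0^*\rangle\int p\,\rho(p)^2\,dp .
\]
Taking $\rho=\rho_n$ with $\supp\rho_n\subset(0,1/n^2)$ and $\int\rho_n^2=n$, so that $\int p\,\rho_n^2\le 1/n$, dividing by $n$ and letting $n\to\infty$ forces $\varphi([x_0^*,x_0])\le 0$.

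Running the mirror computation with $\hat\xi$ supported in $\RR_-$ (now $\widehat{\xi^*}$ is supported in $\RR_+$, so $\pi(\xi^*)\Omega=0$ and one estimates $\|\pi(\xi)\Omega\|^2=\langle\pi(\xi^*)\pi(\xi)\Omega,\Omega\rangle$) yields the opposite inequality $\varphi([x_0^*,x_0])\ge 0$, hence $\varphi([x_0^*,x_0])=0$ for all $x_0\in\fgc$. Replacing $x_0$ by $x_0+\lambda y_0$ and comparing the terms linear in $\lambda$ and in $\bar\lambda$ gives $\varphi([x_0^*,y_0])=0$ for all $x_0,y_0\in\fgc$; since $x\mapsto x^*$ is a bijection of $\fgc$ and $[\fgc,\fgc]=\fgc$ by simplicity, these brackets span $\fgc$, so $\varphi=0$ and therefore $\psi=0$. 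I expect the only delicate point to be the scaling behind the displayed inequality: the test function must be arranged so that the central term $c\langle x_0,x_0^*\rangle\int p\,\rho^2$ becomes negligible while $\big(\int\rho^2\big)\varphi([x_0^*,x_0])$ survives, and it is precisely here that working in the Schwartz class and using Fourier analysis is indispensable; everything else is routine manipulation of the commutation relation together with Lemmas \ref{spectralmeasure} and \ref{support}.
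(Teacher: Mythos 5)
Your proof is correct, and its analytic core coincides with the paper's: in both arguments one takes an element $\zeta$ with $\hat\zeta$ supported in a half-line so that $\pi(\zeta)\Omega=0$ by Lemma \ref{spectralmeasure}, computes $\|\pi(\zeta^*)\Omega\|^2$ via the commutation relation as $\psi$ of a commutator minus the cocycle term, and then concentrates the test function near zero frequency so that the cocycle contribution $c\langle\cdot,\cdot\rangle\int p|\hat f|^2$ is dominated by $\int|\hat f|^2$. Where you genuinely diverge is in the Lie-algebraic endgame. The paper argues by contradiction: it uses self-adjointness of $\psi_0$ to place a witness $x$ inside a Cartan subalgebra, invokes the root decomposition and the $\mathfrak{sl}_2$-triples $(E_\a,F_\a,H_\a)$, notes that the coroots span the Cartan subalgebra so some $\psi_0(H_\a)\neq 0$, and then exhibits a negative-norm vector $\pi(E_\a\otimes f)\Omega$ or $\pi(F_\a\otimes f)\Omega$ according to the sign of $\psi_0(H_\a)$ (the single inequality suffices there because the sign has already been fixed). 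You instead run the estimate for an arbitrary $x_0\in\fgc$ on \emph{both} half-lines to get the two-sided conclusion $\varphi([x_0^*,x_0])=0$, then polarize in $x_0$ and use that $*$ is a bijection together with $[\fgc,\fgc]=\fgc$ (simplicity) to kill $\varphi$ entirely. Your route is more elementary in that it needs no Cartan subalgebra or root system, only perfectness of $\fgc$, and it is stated as a direct proof rather than a reductio; the paper's route makes the failure of unitarity concrete by naming an explicit negative-norm vector. Both are sound; the only points to keep an eye on in yours are the ones you already handle — that $[x_0^*,x_0]$ is self-adjoint so $\varphi([x_0^*,x_0])$ is real and the inequalities make sense, and that the mirror case really does give $\pi(\xi^*)\Omega=0$ because $\widehat{\xi^*}(p)=(\hat\xi(-p))^*$ flips the support to $\RR_+$.
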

\begin{proof}
We will show this by contradiction. To be precise, we assume that $\psi \neq 0$ and
we show that representation is not unitary.

By definition it is easy to see that $\psi$ is self-adjoint.
Let $\xi \in \sg$. As we have seen in Lemma \ref{support}, $\psi(\xi)$
is determined only by $\hat{\xi}(0)$. Let us define $\psi_0:\fgc \to \CC$
such that $\psi(\xi) = \psi_0(\hat{\xi}(0))$.

By the assumption, there is an element $x$ from $\fgc$ such that $\psi_0(x) \neq 0$.
Since $\psi$ is self-adjoint, so is $\psi_0$ and we may assume that $x$ is
self-adjoint and $\psi_0(x) \in \RR$. Then there is a Cartan subalgebra which contains $x$.
Let us consider the root decomposition of $\fgc$ with respect to this
Cartan subalgebra. Let $\a$ be an element in the root system $\Psi$,
and let $\mathfrak{sl}(\a)$ be
the subalgebra of $\fgc$ isomorphic to $\mathfrak{sl}_2(\CC)$ associated
to $\a$. We define put $E_\a, F_\a, H_\a$ the elements in $\mathfrak{sl}(\a)$
corresponding to
\[
E=\left(\begin{array}{cc} 0 & 1 \\ 0 & 0 \end{array}\right),
F=\left(\begin{array}{cc} 0 & 0 \\ 1 & 0 \end{array}\right),
H=\left(\begin{array}{cc} 1 & 0 \\ 0 & -1 \end{array}\right).
\]
We may assume there is a root $\a$ such that $\psi_0(H_\a) \neq 0$, since
the Cartan subalgebra is spanned by $\{H_\a\}_{\a \in\Psi}$.

As the first case, we assume $\psi_0(H_\a)  > 0$.
It holds that $[E_\a,F_\a] = H_\a$ and $E_\a^* = F_\a$.
Let us take a smooth real function $f \in \sch(\RR)$ with $\supp(f) \subset \RR_-$.
We will find a vector in $\H$ with negative norm. In fact, it holds that
\begin{eqnarray*}
\|\pi(E_\a\otimes f)\Omega\|^2
&=& \<\pi(E_\a\otimes f)\Omega, \pi(E_\a\otimes f)\Omega\> \\
&=& \<[\pi(F_\a\otimes \overline{f}), \pi(E_\a\otimes f)]\Omega,\Omega\> \\
&=& \<\pi([F_\a,E_\a]\otimes |f|^2)\Omega\>
- \<F_\a,E_\a\>\frac{c}{2\pi i}\int \overline{f(t)}f^\prime(t) dt \\
&=& \psi_0(-H_\a) \int |\hat{f}(p)|^2dp
- c\<F_\a,E_\a\>\int p|\hat{f}(p)|^2dp .
\end{eqnarray*}
Then if we take a function $f$ such that $\hat{f}$ has support
sufficiently near to $0$ but nonzero, then the norm must be
negative.

If $\psi_0(H_\a) < 0$, we only have to consider the norm
of $\pi(F_\a\otimes f)\Omega$.
\end{proof}

\begin{corollary}
All the ground states on $\sg$ are completely classified by
$c$ and such a representation is possible if and only if
$c \in \NN_+$.
\end{corollary}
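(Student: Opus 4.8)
\proof
The classification part is immediate from what has already been proved: by Lemma~\ref{characterization} a ground state representation of $\sg$ is determined, up to unitary equivalence, by the functional $\psi$ together with the level $c$, and by Theorem~\ref{psizero} one always has $\psi = 0$. Hence such a representation is determined by $c$ alone, and it remains only to decide for which $c$ one exists.

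If $c$ is a positive integer, then the weight $\lambda$ of $\widehat{\fgc}$ with $\lambda(C) = c$ and vanishing restriction to $\fh$ is dominant integral, so by Theorem~\ref{integerlevel} the corresponding vacuum lowest weight representation of $\widehat{\fgc}$ exists and is unitary; it is the Lie-algebra form of the level-$c$ positive-energy representation of $LG$ with vanishing lowest weight. As observed at the beginning of Section~\ref{uniqueness}, its restriction to $\sg$ is a ground state representation of $\sg$ of level $c$. By the first paragraph it is, up to unitary equivalence, the only one.

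Conversely, let $\pi$ be a ground state representation of level $c$. Carrying out the norm computation from the proof of Theorem~\ref{psizero}, but now with $\psi = 0$, gives for any $f$ whose Fourier transform is supported in $\RR_-$
\[
\|\pi(E_\a\otimes f)\Omega\|^2 = -c\,\langle F_\a,E_\a\rangle\int p\,|\widehat f(p)|^2\,dp .
\]
Since $\langle F_\a,E_\a\rangle > 0$ and the integral is strictly negative, unitarity forces $c \ge 0$; and if $c = 0$ the recursion of Lemma~\ref{characterization} makes every $n$-point function vanish, so $\pi$ is the trivial representation. To rule out a non-integer $c > 0$ one brings in Garland's theorem. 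The plan is to extend $\pi$ from $\sg$ to the polynomial loop algebra $\widetilde{\fgc}$: each mode $x\otimes z^k$ is approximated by the Schwartz-class loops $x\otimes z^k\chi_R$ with $\chi_R \to 1$, and one checks --- using Lemma~\ref{commutator} for the commutators with fixed elements of $\sg$ (note that $z^k g$ is again Schwartz for $g\in\sch$), and Lemmas~\ref{spectralmeasure} and \ref{scalar} together with the positivity of the energy for the action on $\Omega$ --- that $\pi(x\otimes z^k\chi_R)$ converges on the common domain. The resulting extension realises $\pi$ as a unitary lowest weight representation of $\widehat{\fgc}$ with vanishing lowest weight and level $c$, so by Theorem~\ref{integerlevel} $c$ must be dominant integral, that is $c\in\NN$. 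Together with $c\neq 0$ this gives $c\in\NN_+$.

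The point I expect to be the real obstacle is precisely this last extension, i.e.\ the reconciliation of the Schwartz-class algebra $\sg$ with the polynomial (or smooth) loop algebra: the modes $x\otimes z^k$ and the constant loops do not lie in $\sg$, and the corresponding operators must be recovered as limits. The same difficulty is present, implicitly, in the ``if'' direction, where one needs $\Omega$ to be cyclic for $\sg$ inside the vacuum module of $\widehat{\fgc}$; this I would instead obtain by a Reeh--Schlieder-type argument, using that $\sg$ contains all smooth loops with compact support in $\RR = S^1\setminus\{\infty\}$, that the associated smeared currents are affiliated with the local algebra $\A(I)$ of an arbitrarily large interval $I$, and that positivity of the energy propagates cyclicity of the vacuum from the full current algebra down to these localised subalgebras.
\endproof
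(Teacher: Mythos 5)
Your opening paragraph matches the paper exactly: Lemma~\ref{characterization} together with Theorem~\ref{psizero} give the classification by $c$ alone, and your supplementary observations ($c\ge 0$ from the norm computation with $\psi=0$, exclusion of $c=0$ because all $n$-point functions then vanish) are correct, if not in the paper. The divergence, and the genuine gap, is in how you force $c$ to be a positive integer. The paper does \emph{not} attempt to recover the polynomial modes $x\otimes z^k$ as limits inside the Schwartz-class representation. It instead exploits the uniqueness just proved: since the ground state representation at level $c$ is unique and the cocycle $\omega_1$ is dilation-invariant (and the positive/negative frequency decomposition used in Lemma~\ref{characterization} is preserved by dilations), composing $\pi$ with a dilation reproduces the same $n$-point functions, so dilations are unitarily implemented with $\Omega$ invariant. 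With translation--dilation covariance in hand one runs the Buchholz--Schulz-Mirbach/L\"uscher--Mack argument: all $n$-point functions extend to $S^1$ and are M\"ob-invariant, the reconstruction theorem yields a representation of $L\fgc$, and only then does Garland's Theorem~\ref{integerlevel} give $c\in\NN_+$. The uniqueness statement is thus the essential input that buys the extra symmetry needed for the extension.

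Your proposed extension to $\widetilde{\fgc}$ via $x\otimes z^k\chi_R$ does not go through with the tools available, and you correctly flag it as the obstacle. Lemma~\ref{commutator} requires the \emph{Fourier transforms} $\widehat{\xi_n}$ to be uniformly bounded and the Lebesgue measure of $\supp(\widehat{\xi_{n'}}-\widehat{\xi_n})$ to decrease to $0$; for $\xi_R = x\otimes z^k\chi_R$ the differences $z^k(\chi_R-\chi_{R'})$ are compactly supported in position space, so their Fourier transforms are entire and have full support, and $\widehat{z^k\chi_R}$ develops a $\delta$-type singularity at $p=0$ as $R\to\infty$ (since $z^k\to 1$ at $t=\pm\infty$), so uniform boundedness fails as well. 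Neither Lemma~\ref{scalar} nor Lemma~\ref{spectralmeasure} controls this limit either, since both require compactly supported or shrinking Fourier supports. So the key step of your ``only if'' direction is missing, and the paper's dilation/M\"obius route is the way it is actually closed. For the ``if'' direction your Reeh--Schlieder sketch for cyclicity of $\Omega$ under $\sg$ is a reasonable supplement; the paper simply asserts that vacuum representations of $L\fgc$ restrict to ground state representations of $\sg$.
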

\begin{proof}
We have seen in Lemma \ref{characterization} that ground states on $\sg$
are completely classified by $\psi$ and $c$, on the other hand Lemma
\ref{psizero} tells us that only the case $\psi = 0$ is possible.

For $L\fgc$ we know that lowest weight representations with invariant
vector with respect to the M\"obius group M\"ob are completely classified
by $c$ and the only possible values of $c$ are positive integers.
What remains to prove is that every ground state representation of
$\sg$ extends to $L\fgc$. This is done by the repetition of the
argument by \cite[Section 4]{BS}.
In fact, we know that a ground state representation $\pi$
is determined by the value of $c$, and the cocycle $\omega$ is invariant
under dilation. Also the positive and negative parts decomposition
in the proof of Lemma \ref{characterization} is not affected by dilation.
Then it is straightforward to check that there is a unitary representation
of dilation under which $\Omega$ is invariant and $\pi$ is covariant.
By analogy with the L\"uscher-Mack theorem, all $n$-point functions extend to
the circle $S^1$ and turn out to be invariant under M\"ob.
Then by the reconstruction theorem, we obtain the representation of $L\fgc$.

It is known that in this case the level $c$ must be a positive integer
by Theorem \ref{integerlevel} (due to \cite{Garland}) or \cite{PS}.
\end{proof}

\section{Concluding remarks: ground states of conformal nets}
\subsection{Ground state representations of Lie algebras and
conformal nets}

The main result in the previous section is the uniqueness of
ground state on the Schwartz class subalgebra $\sg$ of
$L\fgc$. Here we explain its (possible) physical implication.

In the operator-algebraic approach to one-dimensional chiral
conformal field theory, the mathematical object in which we are
interested is the following. Let $\mathfrak{J}$ be the set of
open connected sets of the circle $S^1$. The M\"obius group
M\"ob $\isom PSL(2,\RR)$ acts on $S^1$.
Let $\H$ be a (separable) Hilbert space. A conformal net $\A$
of von Neumann algebras is an assignment to each interval $I \in \mathfrak{J}$
of a von Neumann algebra $\A(I)$ acting on $\H$ which satisfy the following
conditions.
\begin{enumerate}
\item If $I_1 \subset I_2$, then $\A(I_1) \subset \A(I_2)$ (isotony).
\item If $I_1 \cap I_2 = \emptyset$, then $\A(I_1)$ and $\A(I_2)$ commute (locality).
\item There is a unitary representation $U$ of M\"ob such that
$U(\varphi)\A(I)U(\varphi)^* = \A(\varphi(I))$ for $\varphi$ in M\"ob (covariance).
\item The restriction of $U$ to the subgroup of rotations in M\"ob
has positive spectrum (positivity of energy).
\item There is a vector $\Omega$ which is invariant under $U$ and cyclic for
$\A(I)$ (existence of vacuum).
\end{enumerate}

In our context, examples of conformal nets are given in terms of vacuum representations
of loop groups (see for example \cite{GF}).
Explicitly, let $LG$ be the loop group of a certain simple simply connected
Lie group $G$. We take a positive-energy vacuum representation $\pi$ of $LG$ at certain level $k$. Then
we set
\[
\A_{LG_k}(I) := \{\pi(g):\supp(g) \subset I\}^{\prime\prime}.
\]
Isotony is obvious from the definition. Locality comes from the locality of cocycle.
Each such vacuum representation is covariant under the diffeomorphism group $\mathrm{Diff}(S^1)$,
in particular under M\"ob. Positivity of energy is readily seen. The lowest eigenvector of
rotation behaves as the vacuum vector.

A conformal net is considered as a mathematical realization of a physical model.
Several physical states are realized as states on the quasilocal $C^*$-algebra
\[
\overline{\bigcup_{I\Subset \RR} \A(I)}^{\|\cdot\|}.
\]
We denote it simply by $\A$.
On this $C^*$-algebra, the the group of translations acts as one-parameter automorphism $\tau$.

Among all states on $\A$, states which represent thermal equilibrium are of particular interest.
The property of thermal equilibrium is characterized by the following KMS condition \cite{BR2}.
\begin{definition}
A state $\varphi$ on a $C^*$-algebra $\A$ is called a $\beta$-KMS state
(with respect to a one-parameter automorphism group $\tau$)
if for each pair $x,y \in \A$ there is an analytic function
$f(z)$ on $0 < \mathrm{Im} z < \beta$
and continuous on $0 \le \mathrm{Im} z \le \beta$ such that it holds for $t \in \RR$
\[
f(t) = \varphi(x\tau_t(y)), \phantom{...}f(t+i\beta) = \varphi(\tau_t(y)x).
\]
Here, $\frac{1}{\beta}$ is interpreted as the temperature of the state of equilibrium.
\end{definition}

As easily seen, when the temperature goes to $0$, $\beta$ goes to the infinity and
the domain of analyticity approaches to the half-plane. We simply take the following
definition, and consider it as an equilibrium state with temperature zero.
\begin{definition}
A state $\varphi$ on a $C^*$-algebra $\A$ is called a ground state with respect to $\tau$
if for each pair $x,y \in \A$ there is an analytic function $f(z)$ on $0 < \mathrm{Im} z$
and continuous on $0 \le \mathrm{Im} z$ such that it holds for $t \in \RR$
\[
f(t) = \varphi(x\tau_t(y)).
\]
\end{definition}

In general, if $\varphi$ is invariant under translation, the action of translation
is implemented canonically by a one-parameter group of unitary operators in its GNS representation.
It is known that this condition is characterized by its property in the GNS representation
\cite{BR2}.
\begin{theorem}
A translation-invariant state $\varphi$ on $\A$ is a ground state if and only if
the generator of translation in the GNS representation has positive spectrum.
\end{theorem}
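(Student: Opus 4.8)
\medskip\noindent\textbf{Plan of proof.}\quad The plan is to pass to the GNS triple $(\H_\varphi,\pi_\varphi,\Omega_\varphi)$ of $\varphi$. Since $\varphi$ is translation-invariant, the automorphism group $\tau$ is implemented on $\H_\varphi$ by a strongly continuous one-parameter unitary group $U$ fixing $\Omega_\varphi$ (as recalled just above), and by Stone's theorem $U(t)=e^{itH}$ with $H=H^{*}$; ``the generator of translation has positive spectrum'' means $\sigma(H)\subseteq[0,\infty)$. Writing $E$ for the spectral measure of $H$ and, for $x,y\in\A$, $\xi:=\pi_\varphi(y)\Omega_\varphi$, $\eta:=\pi_\varphi(x^{*})\Omega_\varphi$, the identity
\[
\varphi\big(x\,\tau_t(y)\big)=\langle U(t)\xi,\eta\rangle=\int_{\sigma(H)}e^{it\lambda}\,d\langle E(\lambda)\xi,\eta\rangle
\]
exhibits each correlation function as the Fourier transform of a finite complex measure carried by $\sigma(H)$; the theorem is then the translation between the location of this spectrum and a half-plane analyticity property.

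If $\sigma(H)\subseteq[0,\infty)$, then for each pair $x,y$ I would simply set
\[
f(z):=\int_{[0,\infty)}e^{iz\lambda}\,d\langle E(\lambda)\xi,\eta\rangle,\qquad \mathrm{Im}\,z\ge 0 .
\]
Because $|e^{iz\lambda}|=e^{-\lambda\,\mathrm{Im}\,z}\le 1$ for $\lambda\ge 0$ and $\langle E(\cdot)\xi,\eta\rangle$ is a finite measure (total variation at most $\|x\|\,\|y\|$), differentiation under the integral sign makes $f$ holomorphic on $\{\mathrm{Im}\,z>0\}$, dominated convergence makes it continuous and bounded on $\{\mathrm{Im}\,z\ge 0\}$, and the displayed identity gives $f(t)=\varphi(x\,\tau_t(y))$ for $t\in\RR$. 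This is precisely the ground-state condition, so this implication is bookkeeping.

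For the converse I would localize the spectral measure one vector at a time. Fix $x\in\A$, put $\xi:=\pi_\varphi(x)\Omega_\varphi$, and apply the ground-state hypothesis to the pair $(x^{*},x)$: the positive finite measure $\mu:=\langle E(\cdot)\xi,\xi\rangle$ then satisfies $\int_{\RR}e^{it\lambda}\,d\mu(\lambda)=f(t)$ for some $f$ that is bounded, continuous on $\{\mathrm{Im}\,z\ge 0\}$ and holomorphic inside. Split $\mu=\mu_{+}+\mu_{-}$ with $\mu_{+}=\mu|_{[0,\infty)}$, $\mu_{-}=\mu|_{(-\infty,0)}$, and set $g(z):=\int e^{iz\lambda}\,d\mu_{+}(\lambda)$, $h(z):=\int e^{iz\lambda}\,d\mu_{-}(\lambda)$; then $g$ is bounded holomorphic on the closed upper half-plane, $h$ is bounded holomorphic on the closed lower half-plane, and $f-g=h$ on $\RR$. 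By Morera's theorem $f-g$ and $h$ glue across $\RR$ to an entire function, which is bounded and hence constant by Liouville; and since $h(-iy)=\int_{(-\infty,0)}e^{\lambda y}\,d\mu_{-}(\lambda)\to 0$ as $y\to+\infty$, that constant is $0$. Therefore $\int_{(-\infty,0)}e^{it\lambda}\,d\mu_{-}(\lambda)=0$ for every $t\in\RR$, so $\mu_{-}=0$ by uniqueness of Fourier transforms; that is, $\langle E((-\infty,0))\xi,\xi\rangle=0$, hence $E((-\infty,0))\pi_\varphi(x)\Omega_\varphi=0$. Since $\Omega_\varphi$ is cyclic for $\pi_\varphi(\A)$, this forces $E((-\infty,0))=0$, i.e. $\sigma(H)\subseteq[0,\infty)$.

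The only genuinely non-formal ingredient is the complex-analytic lemma in the converse: a \emph{bounded} analytic continuation to the upper half-plane of the Fourier transform of a positive finite measure forces that measure to be supported in $[0,\infty)$ (a Paley--Wiener / F.\ and M.\ Riesz type statement). One must be careful that the boundedness of the defining function $f$ is genuinely used in the Liouville step; on $\RR$ it is automatic from $|\varphi(x\tau_t(y))|\le\|x\|\,\|y\|$, and boundedness on the half-plane is the standard reading of the ground-state condition (cf.\ \cite{BR2}). Everything else --- Stone's theorem, the implementing unitaries produced by translation invariance, differentiation under a finite measure, and cyclicity of the GNS vector --- is routine.
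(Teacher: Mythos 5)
The paper does not prove this theorem at all --- it is quoted from Bratteli--Robinson \cite{BR2} --- so there is no internal proof to compare against; your argument is essentially the standard textbook one and it is correct. Both directions are sound: the forward direction is indeed bookkeeping with the spectral measure, and the converse (split $\mu=\mu_++\mu_-$, glue $f-g$ and $h$ across $\RR$ by Morera, apply Liouville, then use cyclicity of $\Omega_\varphi$) is exactly the Paley--Wiener/F.~and~M.~Riesz-type argument used in \cite{BR2}. The one caveat is the point you already flag yourself: the paper's stated definition of a ground state omits the bound $|f(z)|\le\|x\|\,\|y\|$ on the closed upper half-plane, and without that bound the ``only if'' direction is genuinely false (e.g.\ $f(z)=e^{-iz}$ extends analytically but corresponds to $\mu=\delta_{-1}$), so your reading of the definition as including boundedness is not optional but necessary, and is the one adopted in \cite{BR2}.
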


No direct and obvious way to classify ground states on general conformal
nets is at hand, but the case of loop group nets seems rather hopeful.
Let $\pi$ be a vacuum representation of loop group $LG$,
$\varphi$ be a ground state on $\A_{LG_k}$ and $\pi_\varphi$ be
the GNS representation with respect to $\varphi$.
Let us call temporarily $\mathscr{D}G$ the subgroup of $LG$ with elements
compactly supported in $\RR$, with the identification of $\RR$ as a
part of $S^1$.
Since $\A_{LG_k}$ is generated by local operators, for any group element
$\xi \in LG$ with support in $\RR$ we have $\pi(\xi) \in \A_{LG_k}$,
hence the composition $\pi_\varphi\circ\pi$ is a representation of
$\mathscr{D}G$, covariant under translation implemented by one-parameter
unitary group with positive generator, containing a translation-invariant
vector. Then to classify all ground state representations of $\A_{LG_k}$,
it is enough to classify ground state representations of $\mathscr{D}G$.

Hence the result of this paper can be considered as a first step towards
the classification of ground states of loop groups. The remaining steps
should be roughly the following.
\begin{itemize}
\item To show that every ground state representation of $\mathscr{D}G$ is
differentiable and induces a representation of $\mathscr{D}\fg$.
\item To show that every ground state representation of $\mathscr{D}\fg$ can
be extended to $\sch\fg$.
\end{itemize}
Combining it with the uniqueness result of this paper we would see
the uniqueness of ground state on $\A_{LG_k}$.

Unfortunately, the author is not aware of any concrete strategy to
these points. Recently a general theory about differentiability of
representations of infinite dimensional groups was established by
Neeb \cite{Neeb}. Detailed analysis for ground state representations
could lead to general differentiability. For the second point,
invariance of the ground state vector could imply the extension
of operator valued distribution, in analogy of the case of
distribution.

\subsection{Irreducibility and factoriality of representations}
In section \ref{uniqueness} we have classified representations of
$\sg$ with a cyclic ground state vector. We need to justify that
the assumption of cyclicity is not essential. In fact we would like
to show that any ground state representation should be decomposed
into representations with cyclic ground state vector. This is a bit
problematic at the level of Lie algebras, because operators are unbounded,
hence not defined on the whole space. We would have to take care of
commutation of unbounded operators, density of domain, existence of
eigenvalues, etc. Instead, we content ourselves with considering
the decomposition problem at the level of conformal net.

Here we just restate some well-known results, mainly taken from
standard textbooks. The first two results come from \cite[1.2.3 Corollary and 1.2.7 Corollary,
respectively]{Baumgaertel}.
\begin{theorem}
A ground state representation with a unique ground state vector
is irreducible.
\end{theorem}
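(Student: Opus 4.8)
The plan is to show directly that the commutant $\M':=\pi(\A)'$ of the von Neumann algebra $\M:=\pi(\A)''$ generated by the representation is trivial, which is exactly irreducibility. Write $\Omega$ for the (unique up to scalar) vector fixed by the translation unitaries $U(a)$, whose generator is positive. Two structural facts come for free. First, $\Omega$ is cyclic for $\M$ — it is the GNS vector of the ground state on the quasilocal $C^*$-algebra $\A$ — hence $\Omega$ is separating for $\M'$. Second, translations normalize the net, $U(a)\pi(x)U(a)^*=\pi(\tau_a x)$ with $\tau_a\A=\A$, so $U(a)$ normalizes both $\M$ and $\M'$, and $U(a)\Omega=\Omega$.

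The heart of the proof is the claim that $T\Omega\in\CC\Omega$ for every $T\in\M'$. Granting it, set $c:=\langle T\Omega,\Omega\rangle$; then $(T-c\cdot 1)\Omega=0$ and $T-c\cdot 1\in\M'$, so the separating property of $\Omega$ forces $T=c\cdot 1$, hence $\M'=\CC\cdot 1$. To establish the claim I would use positivity of the energy to conclude that $\M'$ commutes with every $U(a)$: the half-line algebras satisfy $\pi(\A((a,\infty)))\subseteq\pi(\A((0,\infty)))$ for $a\ge 0$, so translations act half-sidedly on $\pi(\A((0,\infty)))$ and a Borchers-type argument (equivalently, the standard fact that for M\"obius-covariant nets and their positive-energy sectors the translation unitaries lie in the weakly closed algebra generated by the local observables) puts $U(\RR)$ inside the bicommutant of the net, so $\M'\subseteq U(\RR)'$. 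Once this is known, $U(a)T\Omega=TU(a)\Omega=T\Omega$ for $T\in\M'$, so $T\Omega$ is a $U$-fixed vector and therefore proportional to $\Omega$ by the uniqueness hypothesis. As a consistency check not needing the commutation, the mean ergodic theorem already shows $\frac{1}{L}\int_0^L U(a)TU(a)^*\,da\to\langle T\Omega,\Omega\rangle\cdot 1$ strongly, using $U(a)TU(a)^*x\Omega=x\,U(a)T\Omega$ for $x\in\M$ together with $\frac{1}{L}\int_0^L U(a)\,da\to|\Omega\rangle\langle\Omega|$; this already forces the translation-invariant part of $\M'$ to be scalar.

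I expect the step $\M'\subseteq U(\RR)'$ to be the only real obstacle: it is false without positivity of energy, and it is exactly where one must invoke either modular/Borchers technology or the localizability of the translations. The remaining ingredients — cyclicity of $\Omega$ for $\M$, the cyclic/separating duality, and the mean ergodic convergence — are routine. Accordingly I would organize the argument as: (i) reduce irreducibility to $\M'=\CC\cdot 1$; (ii) note $\Omega$ cyclic for $\M$ implies $\Omega$ separating for $\M'$; (iii) prove $\M'\subseteq U(\RR)'$ using positivity of energy; (iv) deduce $T\Omega\in\CC\Omega$ and conclude.
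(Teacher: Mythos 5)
Your reduction is the right one, and it is essentially the argument behind the result the paper is quoting: the paper gives no proof of this theorem at all, but cites Baumg\"artel (1.2.3 Corollary), whose proof runs exactly along your lines (i)--(iv). Steps (i), (ii) and (iv) are indeed routine, and your mean ergodic computation is correct as far as it goes (it only controls the translation-invariant part of $\M'$, as you note). The one load-bearing step is (iii), and there you have not given an argument: you gesture at ``a Borchers-type argument''. That is the wrong tool. Borchers' half-sided translation/modular theorem presupposes that $\Omega$ is cyclic \emph{and separating} for a half-line algebra (which you have not established) and outputs commutation relations between $U$ and the modular objects, not the inclusion $U(\RR)''\subseteq\M$. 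The fact you actually need is Araki's theorem: if $\Omega$ is cyclic for $\M$, $U(a)\M U(a)^*=\M$, $U(a)\Omega=\Omega$ and the generator of $U$ is positive, then $U(a)\in\M$. Its proof is elementary, and in fact it delivers your claim $T\Omega\in\CC\Omega$ directly, without the detour through $\M'\subseteq U(\RR)'$.

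Here is the missing step. Fix $T\in\M'$ and $A\in\pi(\A)$, and set $h(a)=\langle U(a)A\Omega,T^*\Omega\rangle$. Since the spectral measure of $U$ is supported in $\RR_+$, $h$ is the boundary value of a function bounded and analytic on the open upper half-plane. On the other hand, writing $U(a)A\Omega=\tau_a(A)\Omega$ with $\tau_a(A)=U(a)AU(a)^*\in\pi(\A)$ and commuting $T$ past $\tau_a(A)$ gives $h(a)=\langle T\Omega,U(a)A^*\Omega\rangle=\overline{\langle U(a)A^*\Omega,T\Omega\rangle}$, which by the same positivity extends to a bounded analytic function on the lower half-plane. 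By the edge-of-the-wedge theorem and Liouville, $h$ is constant, so $\langle A\Omega,\,U(a)^*T^*\Omega-T^*\Omega\rangle=0$ for all $A\in\pi(\A)$; cyclicity of $\Omega$ then forces $T^*\Omega$ (and likewise $T\Omega$) to be $U$-invariant, hence a multiple of $\Omega$ by the uniqueness hypothesis. Your steps (ii) and (iv) now finish: $(T-\langle T\Omega,\Omega\rangle)\Omega=0$ and $\Omega$ is separating for $\M'$, so $\M'=\CC\cdot 1$. In short: correct architecture, one genuine hole exactly where you flagged it, and the hole is filled by positivity-of-energy analyticity rather than by modular theory.
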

\begin{theorem}
Let $\pi$ be a ground state representation. If $\bigvee_{I \Subset \RR} \pi(\A(I))$
is a factor then necessarily it is $B(\H)$. In this case the ground state vector is
unique.
\end{theorem}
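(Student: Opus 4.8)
This is an instance of the general theory of ground states of asymptotically abelian dynamical systems, so the plan is to recast the situation at hand into that framework and then appeal to it (see \cite{Baumgaertel}; the underlying analytic facts are in \cite{BR2}).

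First I would fix the dynamical system. Write $\A := \overline{\bigcup_{I\Subset\RR}\A(I)}^{\,\|\cdot\|}$ for the quasilocal $C^*$-algebra and $\f$ for the translation-invariant ground state with $\pi=\pi_\f$ and GNS vector $\Omega$. Since $\pi$ is $\|\cdot\|$-continuous, $\bigvee_{I\Subset\RR}\pi(\A(I)) = \pi(\A)'' =: \M$, and $\Omega$ is cyclic for $\M$. The vector $\Omega$ is invariant under the canonical implementing unitary group $U$, whose generator $H$ is positive with $H\Omega=0$. Translations permute the bounded intervals, so the automorphisms $\alpha_t := \mathrm{Ad}\,U(t)$ carry $\pi(\A(I))$ onto $\pi(\A(I+t))$ and hence preserve $\M$; thus $(\M,\RR,\alpha,\Omega)$ is a $W^*$-dynamical system with a cyclic invariant ground-state vector, and $\f=\langle\pi(\cdot)\Omega,\Omega\rangle$ is a ground state of $(\A,\RR,\tau)$ in the abstract sense (here $\tau$ is the translation action on $\A$, implemented by $U$ in the GNS picture, with positive generator).

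Next comes the only genuinely model-dependent input, asymptotic abelianness. If $A\in\pi(\A(I_1))$ and $B\in\pi(\A(I_2))$ with $I_1,I_2\Subset\RR$, then for $|t|$ large $(I_1+t)\cap I_2=\emptyset$, so $[\alpha_t(A),B]=0$ by locality; a density argument together with the estimate $\|[\alpha_t(A),B]\|\le 2\|A-A_0\|\,\|B\|+2\|A_0\|\,\|B-B_0\|$, valid for $|t|$ large and local approximants $A_0,B_0$, upgrades this to $\lim_{|t|\to\infty}\|[\alpha_t(A),B]\|=0$ for all $A,B\in\A$. With this, $(\A,\RR,\tau)$ together with $\f$ satisfies the hypotheses of the ground-state theory, which yields: since $\M$ is a factor, $\f$ is a pure state, hence $\pi$ is irreducible, i.e. $\M=B(\H)$; and, by the mean-ergodic theorem applied to $\frac{1}{2T}\int_{-T}^{T}U(t)\,dt$ on the dense set $\pi(\A)\Omega$, the projection onto the $U$-invariant vectors is $|\Omega\rangle\langle\Omega|$, so $\Omega$ is the unique translation-invariant vector up to a phase. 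For these conclusions I would simply cite \cite[Corollary 1.2.7]{Baumgaertel}.

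The substantive step hidden inside that corollary — and the one I expect to be the main obstacle — is the implication that, for a ground state of an asymptotically abelian system, factoriality of $\M$ forces purity of $\f$, equivalently that no non-scalar positive $T\in\M'$ commutes with $U$. This is where the ground-state analyticity is genuinely used: one shows that a positive functional dominated by $\f$ is again a ground state, hence translation-invariant, via a Phragm\'en--Lindel\"of/Liouville argument on the analytic continuation of $t\mapsto\f(x\,\tau_t(y))$ that positivity of $H$ provides; extremality of $\f$ among ground states then forces proportionality. The remaining pieces — identifying the weak closures, the norm estimate for asymptotic abelianness, and the mean-ergodic computation — are routine.
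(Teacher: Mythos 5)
Your proposal matches the paper's treatment: the paper gives no proof of its own here, citing exactly \cite[1.2.7 Corollary]{Baumgaertel} and remarking only that the adaptation from higher-dimensional Poincar\'e-covariant nets to the one-dimensional case is straightforward because Lorentz covariance is never used. You invoke the same corollary and, usefully, actually carry out the verification of its hypotheses (asymptotic abelianness from locality plus the translation action on intervals, and the identification of the global algebra with $\pi(\A)''$), which is precisely the content of the paper's ``straightforward adaptation'' remark.
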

We remark that in the book \cite{Baumgaertel}, the statements are given for
``vacuum representations'' of two or higher dimensional Poincar\'e
covariant nets of von Neumann algebras. In reality in the proofs of these results,
covariance with respect to Lorentz transformations is not used, and
adaptation to the one-dimensional case is straightforward.

For the following we refer the book \cite[Section 5.3.3]{BR2}.
\begin{theorem}
The following are equivalent.
\begin{enumerate}
\item The set of ground states is simplex.
\item If the von Neumann algebra generated by the GNS representation of a
ground state is a factor, then it is $B(\H)$.
\end{enumerate}
\end{theorem}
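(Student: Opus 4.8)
\proof
This is a general structural result about $C^*$-dynamical systems, proved in \cite[Section 5.3.3]{BR2}; the plan is to reconstruct the argument. Write $K$ for the set of ground states of $\A$ for the translation group $\tau$; it is convex and weak-$*$ compact (a convex combination of ground states has its GNS representation inside a direct sum of ground state representations, so the generator stays positive, and the spectrum condition survives weak-$*$ limits). For $\varphi\in K$ let $(\H_\varphi,\pi_\varphi,\Omega_\varphi)$ be the GNS data and $U_\varphi(t)=e^{itH_\varphi}$, with $H_\varphi\ge 0$ and $U_\varphi(t)\Omega_\varphi=\Omega_\varphi$, the canonical implementation of $\tau$ (the characterization quoted above); write $E_0$ for the projection onto $\ker H_\varphi$.

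The crux is the following lemma: $\pi_\varphi(\A)'\Omega_\varphi\subseteq E_0\H_\varphi$, i.e.\ $U_\varphi$ acts trivially on $\overline{\pi_\varphi(\A)'\Omega_\varphi}$. To see it, let $q$ be the projection onto $\overline{\pi_\varphi(\A)'\Omega_\varphi}$; then $q\in\pi_\varphi(\A)''$ and, since $U_\varphi(t)$ normalises $\pi_\varphi(\A)'$ and fixes $\Omega_\varphi$, $q$ commutes with $U_\varphi$. On $q\H_\varphi$ the vector $\Omega_\varphi$ is cyclic for $\pi_\varphi(\A)'$ by construction and separating for it (being cyclic for $\pi_\varphi(\A)''$), so Tomita--Takesaki applies; as $U_\varphi(t)$ preserves this algebra and fixes $\Omega_\varphi$ it commutes with the modular conjugation $J$, and since $J$ is antiunitary and $U_\varphi(t)=e^{itH_\varphi}$ this forces $JH_\varphi J=-H_\varphi$ on $q\H_\varphi$; with $H_\varphi\ge 0$ this gives $H_\varphi q=0$, i.e.\ $q\le E_0$. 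Consequently, for any abelian von Neumann subalgebra $\B\subseteq\pi_\varphi(\A)'$ one has $\overline{\B\Omega_\varphi}\subseteq E_0\H_\varphi$, so in the orthogonal decomposition of $\varphi$ attached to $\B$ each fibre carries an invariant cyclic vector and a positive generator: every orthogonal decomposition of a ground state is a decomposition into ground states, and in particular the central decomposition of $\varphi\in K$ is supported on $K$.

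Now recall that orthogonal probability measures with barycentre $\varphi$ correspond bijectively, and order-isomorphically, to abelian von Neumann subalgebras of $\pi_\varphi(\A)'$. Combining this with the previous paragraph and with Choquet theory --- in the Bishop--de Leeuw form, since $\A$ need not be norm-separable and $K$ need not be metrizable --- one gets: $K$ is a simplex if and only if $\pi_\varphi(\A)'$ is abelian for every $\varphi\in K$. It remains to identify this with (2). If $\pi_\varphi(\A)'$ is abelian for all $\varphi$ and $\pi_\varphi(\A)''$ is a factor, then $\pi_\varphi(\A)'$ is an abelian factor, hence $\CC1$, so $\pi_\varphi(\A)''=B(\H_\varphi)$; this gives (1)$\Rightarrow$(2). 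Conversely assume (2), and suppose $\pi_{\varphi_0}(\A)'$ were nonabelian for some $\varphi_0\in K$. Its central decomposition is supported on ground states by the lemma, its fibres are factorial ground states, hence pure by (2); feeding this back through the direct integral gives $\pi_{\varphi_0}(\A)'=$ the centre of $\pi_{\varphi_0}(\A)''$, which is abelian, a contradiction. So $\pi_\varphi(\A)'$ is abelian for all $\varphi\in K$, and $K$ is a simplex.

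The step I expect to be the real work is the lemma together with the disintegration it feeds: the modular identity $JH_\varphi J=-H_\varphi$ is the one genuinely analytic input, and turning "every orthogonal decomposition stays in $K$" into the simplex statement requires running everything through the orthogonal-measure / pseudo-support machinery of \cite{BR2}, checking carefully that the implementing unitaries and the positivity of the generators disintegrate and that maximal measures are accounted for in the non-metrizable setting. Granting those, the matching with (2) in the last paragraph is routine.
\endproof
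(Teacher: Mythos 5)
The paper offers no proof of this statement at all: it is quoted verbatim from Bratteli--Robinson (it is Theorem 5.3.37 in \cite{BR2}) and the citation to Section 5.3.3 is the entire argument, so there is no in-paper proof to compare yours against. Your reconstruction follows the same route as the cited source: the modular-theoretic lemma ($U_\varphi$ fixes $\Omega_\varphi$ and normalises $\pi_\varphi(\A)'$, hence commutes with $J$, hence $JH_\varphi J=-H_\varphi$ on $\overline{\pi_\varphi(\A)'\Omega_\varphi}$, which together with $H_\varphi\ge 0$ forces $H_\varphi$ to vanish there) is exactly the key analytic input in \cite{BR2}, and you prove it correctly; the consequence that every orthogonal (in particular central) decomposition of a ground state stays among ground states, i.e.\ that the ground states form a face, is likewise the right bridge to the decomposition theory. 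The only place where you assert more than you verify is the intermediate equivalence ``$K$ is a simplex iff $\pi_\varphi(\A)'$ is abelian for all $\varphi\in K$'' together with the claim that the central measure being pseudo-supported on pure states forces $\pi_{\varphi_0}(\A)'$ to coincide with the centre: both rest on the orthogonal-measure and maximality results of \cite[Chapter 4]{BR1} (in particular that when the commutant is abelian the associated orthogonal measure dominates every measure with the same barycentre, and that measures with barycentre in the closed face $K$ are carried by $K$), which you correctly name as the remaining work but do not carry out --- which is an acceptable level of deferral given that the paper defers the whole theorem.
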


Then, a general ground state can be decomposed uniquely into
extremal states (see \cite[Theorem 4.1.15]{BR1}). Any extremal states has a factorial
representation (in fact, if the GNS representation is not factorial then a nontrivial
central projection commutes with the representatives of translation \cite[Theorem 1.1.1]{Baumgaertel},
thus the GNS vector decomposes into two ground state vectors),
hence by the previous theorem it has a unique cyclic ground state vector.

Thanks to these general results, we can reduce a general ground state
into a convex combination of pure ground states. A pure ground state
has a unique ground state vector in its GNS representation. To
classify ground states it is enough to find all pure ground states.
Then it is natural to restrict also the study of Lie algebra representations
to the case with a unique cyclic ground state vector.

\subsubsection*{Acknowledgment.}
I would like to thank Roberto Longo for his inspiring discussions and constant support.
I am grateful to Victor Kac, Karl-Henning Rehren, Mih\'aly Weiner, Paolo Camassa, Daniela
Cadamuro and Wojciech Dybalski for their useful comments, and to the referee of
Annals Henri Poincar\'e for the careful reading and the detailed suggestions.

\end{document}